\documentclass[12pt]{amsart}

\usepackage{mathtools}
\usepackage{amssymb}
\usepackage{eucal}
\usepackage{mathrsfs}
\usepackage{xcolor}
\usepackage[percent]{overpic}
\usepackage{graphicx}
\usepackage{hyperref}
\usepackage{float}
\usepackage{tikz}

\newcommand{\auth}[1]{{\normalfont\scshape #1}}

\newcommand{\st}[1][T]{S_{{#1}}}
\newcommand{\omt}[2]{\Omega^{{#1}}({#2})}
\newcommand{\omT}[3][]{{}^{#1}\Omega^{{#2}}_{#3}}
\newcommand{\It}[2]{I^{{#1}}({#2})}
\newcommand{\IT}[2]{I^{{#1}}_{#2}}
\newcommand{\JT}[2]{J^{{#1}}_{#2}}
\newcommand{\kts}[1]{K^{#1}(t,s)}
\newcommand{\lts}[1]{L^{#1}(t,s)}

\newcommand{\br}{\bar\rho}
\newcommand{\bc}{\bar c}
\newcommand{\bk}{\bar k}
\newcommand{\eps}{\varepsilon}
\newcommand{\half}{\tfrac12}
\newcommand{\kz}{k_0}
\newcommand{\ko}{k_1}
\newcommand{\um}{\underline{M}(k)}
\newcommand{\uv}{\underline{v}}
\newcommand{\rr}{\mathbb{R}}

\newcommand{\sss}{\mathcal S}
\newcommand{\vv}{\CMcal V}
\newcommand{\linfty}{L^\infty}

\newcommand{\loneloc}{L_{\textnormal{loc}}^1}
\newcommand{\isp}[1]{\quad\mbox{#1}\quad}

\DeclareMathOperator{\supp}{supp}
\DeclareMathOperator{\esupp}{ess\, supp}
\DeclareMathOperator{\dist}{dist}

\newtheorem{lemma}{Lemma}
\newtheorem{theorem}{Theorem}
\newtheorem{corollary}{Corollary}

\numberwithin{equation}{section}

\theoremstyle{remark}
\newtheorem*{remark}{Remark}

\theoremstyle{definition}
\newtheorem*{definition}{Definition}

\title[Lifespan and Super-Classical Propagation]{
Lifespan Bounds and 
Super-Classical Propagation for 
Weak Solutions of the 
Three-Dimensional\\
 Compressible Euler Equations
}

\author{
Thomas C. Sideris
}

\address{
Department of Mathematics\\
University of California\\
Santa Barbara, CA 93106
}

\email{
sideris@ucsb.edu
}

\urladdr{
\url{
https://web.math.ucsb.edu/~sideris
}}

\begin{document}

\maketitle

\begin{abstract}
We establish an upper bound on the lifespan of bounded weak solutions
to the three-dimensional isentropic compressible Euler equations on
$[0,T)\times\rr^3$, under a localized positivity condition on the initial data and
the assumption that the essential support of the disturbance propagates into an
undisturbed background state no faster than predicted by classical local
well-posedness. We further show that any entropy-admissible bounded weak solution
with bounded inverse density that persists beyond this upper bound must propagate
into the undisturbed region at a strictly super-classical speed and that this
accelerated propagation is necessarily accompanied by a jump discontinuity in an
associated one-sided $L^\infty$-profile of the solution. 
\end{abstract}

\section[0]{Introduction}

This paper investigates upper bounds for the lifespan  for bounded weak solutions 
of the three-dimensional isentropic compressible Euler equations when the 
disturbance propagates no faster than the classical sound speed and describes the 
manner in which this propagation bound must fail if entropy-admissible 
solutions persist beyond those lifespan bounds.
A common feature of our arguments is the extraction of one-dimensional
information from the three-dimensional flow by means of planar averages
and moving half-spaces, without imposing any planar symmetry on the solution.

We work with the  equations of motion
for the fluid density $\rho>0$ and velocity $u\in\rr^3$ in nondimensional form  
\begin{subequations}
\begin{equation}
\label{CEE}
\begin{aligned}
&\partial_t\rho+\nabla\cdot(\rho u)=0\\
&\partial_t(\rho u)+\nabla\cdot(\rho u\otimes u)+\nabla p(\rho)=0,
\end{aligned}
\end{equation}
in which  the pressure $p$ is determined by the equation of state 
\begin{equation}
\label{press}
p(\rho)=\gamma^{-1}\rho^\gamma,\quad \gamma>1.
\end{equation}
\end{subequations}
Initial data $(\rho_0,u_0)\in L^\infty(\rr^3;\rr_{>0}\times\rr^3)$
are assigned at time $t=0$.
The precise notion of weak solution 
  on a strip $\st=[0,T)\times\rr^3$
 will be given in the next section.

Local well-posedness of finite energy perturbations  
\begin{equation*}
(\rho-\br,u)\in C([0,T);H^s(\rr^3)),
\end{equation*}
for any $s>5/2$ and  $\br>0$,
is a consequence of the  classical theory of positive symmetric hyperbolic systems, see \auth{Kato}
\cite{Kato-1975}.  Moreover,
 local uniqueness implies that if the initial data satisfies 
\begin{equation}
\label{introsupp}
\supp(\rho_0-\br,u_0)\subset \{x\in\rr^3:|x|\le1\},
\end{equation}
then the  disturbance propagates with finite speed in the sense that
\begin{equation}
\label{introsupp1}
\supp(\rho-\br,u)\subset\{(t,x)\in\st:|x|\le 1+\bc t\},
\end{equation}
 where $\bc$ is the sound speed
determined by the background density $\br$, namely $\bc=p'(\br)^{1/2}$.
In \cite{Sideris-1985}, \auth{Sideris} showed that the lifespan $T$ of these classical solutions
is   finite for
any  $H^s$ initial data satisfying \eqref{introsupp} and, for instance, under the compressive  positivity condition
\begin{equation}
\label{classicalcomp}
\rho_0(x)-\br>0,\quad x\cdot u_0(x)>0
\end{equation}
on a thin annulus
$\kz\le |x|<1$.  In particular, no assumption is made about the size of
$\|(\rho_0-\br,u_0)\|_{H^s}$.  This was later extended to two-dimensional flows
 by \auth{Rammaha} \cite{Rammaha-1989}.

The past two decades have brought substantial advances in the understanding 
of how multidimensional classical solutions evolve toward shock formation.
\auth{Christodoulou} \cite{Christodoulou-2007} developed a geometric theory of shock
 formation for the relativistic Euler equations with small compressive irrotational 
 initial perturbations of a constant equilibrium state, 
describing the geometry of the singular boundary of the maximal classical development 
and the mechanism responsible for shock formation. 
He also formulated the shock development problem for continuing the solution 
beyond this boundary and solved a restricted form of the problem in \cite{Christodoulou-2019}.
\auth{Christodoulou} and \auth{Miao}
\cite{Christodoulou-Miao-2014}  extended the geometric framework
to the nonrelativistic irrotational compressible Euler
equations.  
For nonzero vorticity, further major developments include the construction and
stability   of shock-forming solutions by
\auth{Luk} and \auth{Speck} \cite{Luk-Speck-2018,Luk-Speck-2024}
and by \auth{Buckmaster}, \auth{Shkoller}, and \auth{Vicol}
\cite{Buckmaster-Shkoller-Vicol-2023};
the analysis of the geometry of the maximal classical development and of shock formation by
\auth{Shkoller} and \auth{Vicol} \cite{Shkoller-Vicol-2024};
and, in the setting of azimuthal symmetry, the construction of post-shock entropy solutions by
\auth{Buckmaster}, \auth{Drivas}, \auth{Shkoller}, and \auth{Vicol}
\cite{Buckmaster-Drivas-Shkoller-Vicol-2022}.

In contrast, there is no general local well-posedness theory for weak solutions of systems of
hyperbolic conservation laws in multiple dimensions,
and an analogue of the maximal classical development is no longer available.
\auth{Rauch} \cite{Rauch-1986} showed that the BV class,
upon which the successful one-dimensional theory rests,
is unsuitable for local well-posedness.
In the absence of such a theory, we instead study bounded weak solutions,
a natural class that is sufficiently broad to accommodate shock formation.
However, at this level of generality, uniqueness is no longer guaranteed.
Building on convex-integration methods for the compressible Euler equations developed by
\auth{Chiodaroli}, \auth{De Lellis}, and \auth{Kreml}~\cite{CDLK-2015}, 
\auth{Chiodaroli}, \auth{Kreml}, \auth{M\'acha}, and 
\auth{Schwarzacher}~\cite{Chiodaroli-2021} demonstrated loss of uniqueness
within the entropy-admissible subclass for smooth initial data whose associated
classical solution breaks down.

Despite this lack of well-posedness, meaningful information can still be obtained
in multiple dimensions.
Our approach originates in the work of \auth{Sideris} \cite{Sideris-1984}, which
established upper bounds for the lifespan of weak solutions of certain nonlinear
hyperbolic systems under suitable largeness assumptions.
Our first main result, Theorem~\ref{thm1}, extends the finite lifespan theorem of
\auth{Sideris} \cite{Sideris-1985} for the compressible Euler equations
from the classical to the  bounded weak setting.
We consider a bounded weak solution $(\rho,u)$ of \eqref{CEE}, \eqref{press} on $\st$ such that
\begin{multline}
\label{wkboundsupp}
\esupp(\rho-1,u)\cap\{(t,x)\in\st:\kz+ t \le \omega\cdot x\}\\
\subset\{(t,x)\in\st:  |x|\le 1+ t\},
\end{multline}
for some  $0\le\kz<1$ and unit vector $\omega\in\rr^3$.
This localization reflects
 the classical  propagation  rate \eqref{introsupp1}, when $\br=\bc=1$.
 If the initial data satisfies a localized positivity condition analogous to
\eqref{classicalcomp}, then there is an explicit upper bound $T\le T_0$ depending only
on the parameter $\kz$, the index $\gamma$, and the initial data.
No restriction is placed on the size of the initial data.
In particular, if the amplitude of the data is $\mathcal O(\eps)$, for $\eps\ll1$, then the
upper bound for $T$  has the form $\exp(C/\eps)$ for some $C>0$, consistent with 
 the classical local existence theory for the irrotational initial value problem in three dimensions,
see \auth{Sideris}  \cite{Sideris-1991}.

 The method of proof measures the concentration of  the compressive disturbance
   ahead of the wave front $\omega\cdot x=s$
 via the quantity
 \begin{equation*}
v(t,s)=\int_{\{\omega\cdot x>s\}}(\rho(t,x)-1)(\omega\cdot x-s)^2dx,\quad s\ge\kz+t.
\end{equation*}
 Formally, the function $v(t,s)$ satisfies a  one-dimensional wave equation 
with unit propagation speed whose right-hand side is nonnegative.
 Instead of working directly with this equation, we recover  the d'Alembert representation formula
 for $v(t,s)$ from the weak formulation, using suitable  test functions.
 By the convexity of the equation of state in $\rho$ and the propagation hypothesis
 \eqref{wkboundsupp}, the representation yields an inhomogeneous nonlinear 
 integral inequality giving a lower bound for $v(t,s)$.
We then iterate this inequality, in the spirit of \auth{John}'s method in \cite{John-1979},
 to obtain an upper bound for the lifespan of weak solutions satisfying \eqref{wkboundsupp}.
 In this reduction, the effects of three-dimensional dynamics
are encoded in a time-dependent weighted volume of the
integration domain
\[
\{x\in\rr^3:s\le\omega\cdot x,\ |x|\le1+t\}.
\]

This leaves open the possibility that weak solutions  persist beyond $T_0$
  by propagating into the undisturbed background state at a super-classical rate. 
  Indeed, in the irrotational setting,
  \auth{Ginsberg} and \auth{Rodnianski} \cite{Ginsberg-Rodnianski-2024}
 construct global   three-dimensional weak
shock  solutions, close to Landau's two-shock $N$-wave profile, 
in which the exterior state ahead of the outgoing shock is the constant equilibrium state and the shock is logarithmically separated from the classical light cone.

  The remaining results address this possibility
  under the additional assumptions of entropy admissibility and boundedness of $\rho^{-1}$.
 Theorem~\ref{thm2} establishes a localized weak--strong uniqueness principle,
with the constant equilibrium state as the reference solution.
It is related to Theorem~5.2.1 of \auth{Dafermos} \cite{Dafermos-2016}
for general systems of hyperbolic conservation laws and Corollary~4.5 of
\auth{Wiedemann} \cite{Wiedemann-2018} for the Euler system, but the proof
given here does not invoke the standard Gronwall argument and yields
a quantitative estimate.
This estimate is applied in Corollary~\ref{cor1} to show
that the propagation speed of a perturbation into the undisturbed background state $(1,0)$
is bounded above by an explicit constant
   $c_0=1+C(\gamma)m$ where
   \[
  m= \|\rho^{-1}(\rho-1)\|_{L^\infty(\st)}+\|u\|_{L^\infty(\st)}.
   \]

The final main result, Theorem~\ref{thm3}, combines the preceding
uniqueness, finite-propagation, and lifespan results.
If an entropy-admissible weak solution on $\st$ persists beyond the
classical-speed lifespan bound $T_0$ furnished by
Theorem~\ref{thm1}, then there exists a transition time
$
0\le\tau\le T_0
$
which marks the onset of penetration into the undisturbed region
beyond the classical wave front.
 This penetration is discontinuous in the following sense. 
 For every $\tau<\tau'<T$,  there exist
a unit vector $\nu\in\rr^3$ and a speed $c_1$ with $1<c_1\le c_0$ such that,
upon setting
\[
{\mathcal H}^c=\{(t,x)\in(\tau,\tau')\times\rr^3:\nu\cdot x\ge1+\tau+c(t-\tau)\},
\]
 the nonincreasing function
\[
g(c)=\|\rho-1\|_{L^\infty\left({\mathcal H}^c\right)}
+\|u\|_{L^\infty\left({\mathcal H}^c\right)}, \quad 1\le c<\infty,
\]
has a jump discontinuity at $c=c_1$, with
$g(c_1^-)>g(c_1^+)=0$.
The consequences of this result for initial data with different
degrees of regularity at the wave front are examined in
Section~\ref{frontreg}.

 \section{Notation and Definitions}
\label{notation}
Given a fixed background density $\br=1$,  its associated sound speed is
$\bc=p'(\br)^{1/2}=1$.

For any $0<T<\infty$, we define the spacetime strip $\st=[0,T)\times\rr^3$.

\begin{definition}
\begin{subequations}
Given $T>0$,   we say that
a pair $(\rho,u)$ 
 is a  weak solution of the compressible Euler equations \eqref{CEE}, \eqref{press} on $\st$
with initial data $(\rho_0,u_0)$ if the following conditions hold:
\begin{equation}
\label{wkreg}
(\rho,u)\in \linfty(S_T;\rr_{>0}\times\rr^3),
\end{equation}
 \begin{equation}
\label{wkidreg}
(\rho_0,u_0)\in\linfty(\rr^3;\rr_{>0}\times\rr^3),
\end{equation}
\begin{equation}
\label{weakformulation1}
\iint_{S_T}(\rho\partial_t\varphi+\rho u\cdot \nabla_x\varphi)dxdt
+ \int_{\rr^3}\rho_0\varphi(0,\cdot)dx=0,
\end{equation}
and
\begin{equation}
\label{weakformulation2}
\begin{multlined}
\iint_{S_T}(\rho u\cdot\partial_t\psi+\rho\langle u\otimes u,\nabla_x\psi\rangle
+\tfrac1\gamma\rho^\gamma\nabla_x\cdot\psi)dxdt\\+\int_{\rr^3}\rho_0 u_0\cdot\psi(0,\cdot)dx=0,
\end{multlined}
\end{equation}
for all  pairs 
 \begin{equation*}
 (\varphi,\psi)\in
C^{0,1}_0(S_T;\rr\times\rr^3).
\end{equation*}
\end{subequations}
\end{definition}
That is, the test function space consists of compactly supported
 Lipschitz pairs $(\varphi,\psi)$
on the strip $S_T$.
We define weak solutions against Lipschitz test functions to facilitate the   proofs below. By standard mollification and dominated convergence, this is equivalent to the usual definition using $C_0^\infty$
  test functions.

Since $\rho$ and $\rho_0$ are defined only almost everywhere, the positivity assumptions in \eqref{wkreg} and \eqref{wkidreg} are understood in the almost-everywhere sense.

\begin{definition}
A weak solution  $(\rho,u)$ of \eqref{CEE}, \eqref{press} in $S_T$ with initial data
$(\rho_0,u_0)$ is said to be {\em entropy-admissible} 
if in addition
it satisfies the energy inequality
\begin{equation}
\label{admissible}
\begin{split}
\iint_{S_T}&\left[\left(\half\rho|u|^2+\tfrac{1}{\gamma(\gamma-1)}\rho^{\gamma}\right)\partial_t\varphi\right.\\
\phantom{S_T}
&+\left.\left(\half\rho|u|^2+\tfrac{1}{\gamma-1}\rho^{\gamma}\right)u\cdot\nabla_x\varphi\right]dxdt\\
&\qquad
+ \int_{\rr^3}\left(\half\rho_0|u_0|^2+\tfrac{1}{\gamma(\gamma-1)}\rho_0^{\gamma}\right)\varphi(0,\cdot) dx
\ge0,
\end{split}
\end{equation}
for all  nonnegative scalar test functions 
\begin{equation*}
\varphi\in C^{0,1}_0(S_T;\rr_{\ge0}).
\end{equation*}
\end{definition}

Every bounded classical solution in $ C^1(S_T;\rr_{>0}\times\rr^3)$ is an entropy-admissible 
 weak solution
 because it satisfies the energy equality.

  If $(\rho,u)$ is an
entropy-admissible weak solution on $S_T$, then
since $(\rho,u)\in L^\infty(S_T)\subset \loneloc(S_T)$, for almost all
$\sigma\in(0,T)$, there exists
\[
        (\rho_\sigma,u_\sigma)\in
        L^\infty(\rr^3;\rr_{>0}\times\rr^3)
\]
such that, for every compact set $K\subset\rr^3$,
\[
 \lim_{h\downarrow0}
 \frac1{2h}\int_{\sigma-h}^{\sigma+h}
 \|(\rho(t,\cdot),u(t,\cdot))-(\rho_\sigma,u_\sigma)\|_{L^1(K)}\,dt
 =0.
\]
For every such $\sigma$, a standard time-cutoff argument shows that
the restriction of $(\rho,u)$ to
$[\sigma,T)\times\rr^3$ is an entropy-admissible weak solution on the
shifted strip, with initial data $(\rho_\sigma,u_\sigma)$.

We shall refer to $\sigma$ as a Lebesgue time for the map
$t\in[0,T)\mapsto (\rho(t,\cdot),u(t,\cdot))\in \loneloc(\rr^3)$,
and the pair $(\rho_\sigma,u_\sigma)$ as a Lebesgue value.

\bigskip

To describe the propagation of the essential support of weak disturbances, we introduce the following notation.

For a fixed unit vector $\omega\in\rr^3$, domain parameters $0\le\kz<\ko$, 
and time $t\ge0$, define the
half-space
\begin{equation*}
\omt{\kz,\infty}{t}=\{x\in\rr^3: \kz+  t\le \omega\cdot x\}
\end{equation*}
and the spherical cap
\begin{equation*}
\omt{\kz,\ko}{t}
=\omt{\kz,\infty}t\cap\{x\in\rr^3:|x|\le \ko+t\}.
\end{equation*}
Given $0<T<\infty$,  we define the spacetime region
\begin{equation*}
\omT{\kz,\ko}T=\left\{(t,x)\in \st:x\in\omt{\kz,\ko}{t}\right\}.
\end{equation*}
The region $\omT{\kz,\infty}{T}$ represents a spacetime half-space intersected with $S_T$,
and $\omT{\kz,\ko}{T}$, $0\le\kz<\ko<\infty$,  describes  the portion of the forward  light  cone  
contained in $\omT{\kz,\infty}{T}$. 

\section{Main Results}

\begin{theorem}[Finite lifespan under classical-speed propagation]
\label{thm1}

 Let $(\rho,u)$ be a  weak solution 
 of \eqref{CEE}, \eqref{press}
on $S_T$, where $0<T< \infty$, with initial
 data $(\rho_0,u_0)$, so that
   \eqref{wkreg}, \eqref{wkidreg}, \eqref{weakformulation1},
 and \eqref{weakformulation2}
 are in force.

For fixed unit vector $\omega\in \rr^3$ and parameter $0\le\kz<1$, let $\omt{\kz,1}0$
and $\omT{\kz,1}T$ 
be defined as above.

Define the function
\begin{subequations}
 \begin{equation}
\label{wzdef}
 w_0(k)=\half\int\limits_{\omt{k,1}{0}}
\left({\omega\cdot\rho_0 u_0(x)}+{\rho_0(x)}-1\right)
(\omega\cdot x-k)^2dx.
\end{equation}
 Under \eqref{wkidreg}, $w_0$ is continuous on $\kz\le k\le1$.
\medskip
   
Suppose that within $\omT{\kz,\infty}T$
the disturbance  propagates into the constant state $(\br,0)=(1,0)\in\rr_{>0}\times\rr^3$ with speed at most $\bc=1$:
   \begin{equation}
\label{esupp}
\esupp(\rho-1,u)\cap \omT{\kz,\infty}T
\subset \omT{\kz,1}T,
\end{equation}
 and  that at the front
    the initial disturbance is weakly outgoing:
\begin{equation}
\label{wkdatacond}
\text{ $w_0$
is positive and decreasing for $\kz\le k<1$}.
\end{equation}
\end{subequations}

Then  the lifespan $T$ has the upper bound
\begin{subequations}
\begin{equation}
\label{concl}
 T\le T_0(\kz), 
\end{equation}
where 
\begin{equation}
\label{etub0}
T_0(\kz)
=
\inf_{\kz<k<1}T(k,\kz)-\frac{1+\kz}{2}.
\end{equation}
and
\begin{equation}
\label{etub}
T(k,\kz)
=
\begin{cases}
\displaystyle
\exp\left[
\Gamma(\gamma)
\frac{(1-\kz)^6}{(k-\kz)w_0(k)}
\right],
& \gamma\ge2,
\\[4mm]
\displaystyle
\exp\left[
\begin{aligned}
&\Gamma(\gamma)
\frac{(1-\kz)^6}{(k-\kz)w_0(k)}
\\[-1mm]
&\quad\times
\left(
1+\frac{3w_0(k)}{(1-k)^4}
\right)^{2-\gamma}
\end{aligned}
\right],
& 1<\gamma<2.
\end{cases}
\end{equation}
The constant
$\Gamma(\gamma)$ depends only  on $\gamma$.

\end{subequations}
\end{theorem}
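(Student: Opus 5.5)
We follow the strategy outlined in the introduction. Write $\xi=\omega\cdot x$ and, for $s\ge\kz+t$, set
\[
v(t,s)=\int_{\{\xi>s\}}(\rho-1)(\xi-s)^2dx .
\]
By \eqref{esupp}, the integrand is supported in the cap $\omt{s,1}{t}$. Formally, \eqref{CEE} gives
\[
\partial_t^2\rho=\partial_i\partial_j(\rho u_iu_j+p\,\delta_{ij}).
\]
Integrating against $(\xi-s)^2$ over $\{\xi>s\}$ then yields
\[
\partial_t^2v-\partial_s^2v=2\int_{\{\xi>s\}}\bigl(\rho(\omega\cdot u)^2+p(\rho)-p(1)-(\rho-1)\bigr)dx=:F(t,s)\ge0,
\]
where we used $p'(1)=1$ and the convexity of $p$. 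Rigorously, we avoid differentiating $v$. Instead, we insert into \eqref{weakformulation1}, \eqref{weakformulation2} Lipschitz test functions of the form $\varphi=\chi(t,\xi)$ and $\psi=\omega\,\eta(t,\xi)$, where $\chi,\eta$ are built from $(\xi-s)_+^2$ and the characteristic triangle with vertex $(t,s)$ for the 1D operator $\partial_t^2-\partial_s^2$. These are cut off at large $|x|$, which is harmless because of the support hypothesis; only their spatial support must be localized, so we use \eqref{esupp} to replace infinite-extent test functions by compactly supported ones. The result is the d'Alembert formula in weak form:
\[
v(t,s)=\tfrac12\bigl[v_0(s+t)+v_0(s-t)\bigr]+\tfrac12\int_{s-t}^{s+t}v_1\,d\sigma+\tfrac12\iint_{\Delta(t,s)}F .
\]
Here $v_1$ is built from $\omega\cdot\rho_0u_0$, and $\Delta$ is the backward triangle, restricted to $\sigma\ge\kz+\tau$ where \eqref{esupp} applies. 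Near $s+t$ the data vanish because $v_0(k)=0$ for $k\ge1$. Combining the data terms along the outgoing characteristic produces exactly $w_0(s-t)$ when $s-t\in[\kz,1)$. Hence, for $\kz\le s-t<1$,
\[
v(t,s)\ge w_0(s-t)+\tfrac12\iint_{\Delta}F .
\]

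Next comes the nonlinear lower bound for $F$. Dropping the kinetic part, we have
\[
p(\rho)-p(1)-(\rho-1)\ge c(\gamma)\min(|\rho-1|^2,|\rho-1|^\gamma)
\]
for $\gamma<2$, and $\ge c(\gamma)|\rho-1|^2$ for $\gamma\ge2$. Apply Hölder with weight $(\xi-\sigma)^2$ over the cap $\omt{\sigma,1}{\tau}$, whose weighted volume is
\[
\int_{\omt{\sigma,1}{\tau}}(\xi-\sigma)^4dx\lesssim(1+\tau-\sigma)^{6}(1+\tau)
\]
since the cap has width $1+\tau-\sigma$ and cross-section radius about $\sqrt{(1+\tau)(1+\tau-\sigma)}$. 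This gives
\[
F(\tau,\sigma)\gtrsim\frac{v(\tau,\sigma)^2}{(1+\tau)(1+\tau-\sigma)^{6}}
\]
for $\gamma\ge2$. For $1<\gamma<2$ we only obtain the power $\gamma$, or we need an upper bound on $v$ to compare the quadratic and $\gamma$-power regimes. Since $\rho$ is merely bounded with no explicit constant allowed, we instead bound the relevant size a priori: on the region where $v$ dominates, the first iterate itself gives the comparison, which I expect produces the factor $(1+3w_0(k)/(1-k)^4)^{2-\gamma}$. In practice I would split $\rho-1$ at $|\rho-1|\le1$ versus $>1$ and use $v\le$ the $L^1$ weighted mass to control the large part. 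This yields a closed integral inequality of the form
\[
v(t,s)\ge w_0(s-t)+c\iint_{\Delta}\frac{v^2}{(1+\tau)(1-\kz)^{6}}\,d\sigma\,d\tau .
\]

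Finally we iterate in the style of \cite{John-1979}. Fix $k\in(\kz,1)$ and restrict to the strip $\kz\le s-t\le k$. Since $w_0$ is decreasing, $v\ge w_0(k)$ there. The $d\sigma$-integration over a characteristic strip of width $k-\kz$ leaves an ODE-type inequality along the characteristic coordinate $t+s$. Its form is
\[
V(r)\ge w_0(k)+\frac{c\,(k-\kz)}{(1-\kz)^{6}}\int^{r}\frac{(r-r')V(r')^2}{r'}\,dr',
\]
or, after a cruder lower bound, a Riccati-type inequality in $\log r$. Such an inequality blows up once
\[
\log\Bigl(\frac{r}{1+\kz}\Bigr)\gtrsim\frac{(1-\kz)^6}{(k-\kz)w_0(k)} .
\]
Translating $r=t+s$ with $s\approx t+\kz$ back to $t$ gives the shift $-(1+\kz)/2$, and taking the infimum over $k$ gives $T\le T_0(\kz)$.

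The main obstacle is the rigorous d'Alembert step, namely choosing Lipschitz test functions adapted to the triangle $\Delta(t,s)$ and the moving half-space. Justifying pointwise-in-$(t,s)$ identities for an $L^\infty$ solution requires working at Lebesgue times, or averaging in $(t,s)$ and passing to the limit. The second delicate point is the $1<\gamma<2$ convexity bound.
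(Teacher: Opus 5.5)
\textbf{There is a genuine gap, in the case $1<\gamma<2$.}

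Your architecture is the paper's: a weak d'Alembert formula for $v$, a convexity lower bound for the source, and a John-type iteration on the strip $\kz\le s-t\le k$. For $\gamma\ge2$, your Cauchy--Schwarz bound $\int\Phi(\rho-1)\,dx\ge c\,v^2/W$, with $W=\int_{\mathrm{cap}}(\omega\cdot x-\sigma)^4dx$, is a valid substitute for the paper's Jensen step.

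For $1<\gamma<2$, the bound $F(\tau,\sigma)\gtrsim v^2/((1+\tau)(1-\kz)^6)$ with a constant depending only on $\gamma$ is false. Take $\rho-1\equiv A$ on the cap: then $\int\Phi(\rho-1)\,dx\sim A^\gamma$ while $v^2\sim A^2$. Your bound ``$v\le$ weighted $L^1$ mass'' costs $\|\rho\|_{L^\infty}$, which may not enter $\Gamma(\gamma)$. So splitting at $|\rho-1|=1$ cannot close in the power $2$. Had it closed, you would have proved \eqref{etub} without the factor $(1+3w_0(k)/(1-k)^4)^{2-\gamma}$, and your sketch never actually produces that factor.

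The missing idea is that only lower bounds $v\ge\uv\ge w_0(k)$ are ever needed.
\begin{itemize}
\item Jensen's inequality with the probability measure $(\omega\cdot x-\sigma)^2dx/M$, where $M=\int_{\mathrm{cap}}(\omega\cdot x-\sigma)^2dx$, gives $\int\Phi(\rho-1)\,dx\ge(1-\kz)^{-2}M\Phi(v/M)$. So $\Phi$ is evaluated only at the positive number $\uv/M$.
\item Then use $\Phi(\eta)\ge\frac{\gamma-1}{2}\eta^2(1+\eta)^{\gamma-2}$.
\item From $\uv\ge w_0(k)$ and $M\ge\frac13(1-k)^4$ you get $1+\uv/M\le\uv\,\bigl(1+3w_0(k)(1-k)^{-4}\bigr)/w_0(k)$.
\item Together these turn the integrand into $\uv^{\gamma}/(1+\sigma+\tau)$ times $\frac{\gamma-1}{2}(1-\kz)^{-6}\bigl(w_0(k)/(1+3w_0(k)(1-k)^{-4})\bigr)^{2-\gamma}$.
\end{itemize}
The iteration must then run in the power $\gamma$, with $q_n=(\gamma^n-1)/(\gamma-1)$. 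Then $C_n^{1/q_n}$ tends to a $\gamma$-dependent multiple of $(1-\kz)^{-6}w_0(k)\bigl(1+3w_0(k)(1-k)^{-4}\bigr)^{\gamma-2}$, which is exactly where the factor in \eqref{etub} comes from. Your splitting can be rescued only by writing $v^2\ge w_0(k)^{2-\gamma}v^\gamma$ and closing in $v^\gamma$.

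\textbf{Two further steps need repair.}

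First, the data term is guaranteed to equal $w_0(s-t)$ only when $s+t\ge1$. Otherwise the incoming contribution $\half\int_{\{\omega\cdot x>s+t\}}(\rho_0-1-\omega\cdot\rho_0u_0)(\omega\cdot x-s-t)^2dx$ survives and has no sign. So your lower bound for all $\kz\le s-t<1$ is unjustified, and the iteration must live on $\JT{\kz,k}T=\IT{\kz,k}T\cap\{s+t\ge1\}$. Evaluating at $(T,\kz+T)$ is what produces $\ln\frac{1+s+t}{2}$ and the shift $\frac{1+\kz}{2}$. The half-space localization of the data, \eqref{esupp0.1}--\eqref{esupp0.2}, is not a hypothesis; it must be derived from \eqref{esupp} through the weak formulation.

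Second, your ODE with kernel $(r-r')$ does not match the geometry. In null coordinates $a=\sigma+\tau$, $b=\sigma-\tau$, the truncated cone $K(t,s)\cap\JT{\kz,k}T$ is the rectangle $[1,s+t]\times[s-t,k]$, so no $(r-r')$ weight appears. Such a kernel would in fact give a polynomial rather than exponential lifespan, contradicting your own blow-up criterion. Moreover, the $b$-width $k-(s-t)$ degenerates as $s-t\uparrow k$, so $\inf_b v$ satisfies no closed inequality. You need one of two fixes:
\begin{itemize}
\item the paper's product ansatz $C_n(k-s+t)^{q_n}\bigl(\ln\frac{1+s+t}{2}\bigr)^{q_n}$, where each step loses only a factor $(\gamma q+1)^{-2}$ whose product $B_n$ stays bounded; or
\item an average in $b$ with weight $(b-\kz)^\beta$ ($\beta$ large) plus H\"older, which yields an honest Riccati inequality in $\ln(1+a)$.
\end{itemize}
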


 \begin{remark}
The positivity of the density a.e.\ has been assumed explicitly.
Even for smooth solutions, the restriction of the initial data to
$\omt{\kz,\infty}0$ does not determine the solution throughout
$\omT{\kz,\infty}T$, since information may enter through the
lateral boundary.
\end{remark}

\begin{remark}
The hypothesis \eqref{wkdatacond} on the initial data
can be replaced by the assumption that $w_0(k)>0$, $\kz\le k<1$.
In this case,  $w_0(k)$ would be replaced  by
$\underline{w}_0(k)=\min\{w_0(s):\kz\le s\le k\}$  in  $T(k,\kz )$, see \eqref{remark}.
\end{remark}

\begin{remark}
We shall see in the proof of Theorem~\ref{thm1} that the support property
\eqref{esupp} for the solution implies the corresponding localized support
property for the data, namely \eqref{esupp0.1} and \eqref{esupp0.2}.
\end{remark}

\begin{theorem}[Localized weak--strong uniqueness near the background]
\label{thm2}
 Suppose   that $(\rho,u)$ is an entropy-admissible weak solution on $S_T$ 
 with initial data $(\rho_0,u_0)$, and
assume
 that $\|\rho^{-1}\|_{L^\infty(S_T)}<\infty$. Let $m>0$ satisfy
\begin{equation}
\label{ellinfbound}
\|\rho^{-1}(\rho-1)\|_{L^\infty(S_T)}+\|u\|_{L^\infty(S_T)}\le m.
\end{equation}

Let $\vv \subset\rr^3$ be a nonempty,  closed, and bounded set with Lipschitz boundary.
For $0<\alpha\le1$ define the function
\begin{equation*}
d_\alpha(t,x)=(\dist(x,\rr^3\setminus\vv)-t/\alpha)_+,\quad (t,x)\in\st,
\end{equation*}
and then define the associated family of spacetime sets 
\begin{equation*}
\vv_\alpha=\supp d_\alpha\cap\st.
\end{equation*}
Define the bounded, nondecreasing, left-continuous function 
\begin{equation*}
F(\alpha;\vv)=\|\rho^{-1}(\rho-1)\|_{L^\infty\left(\vv_\alpha\right)}
+\|u\|_{L^\infty\left(\vv_\alpha\right)},
\quad  0<\alpha\le1,
\end{equation*}
and set
\begin{equation*}
F(0;\vv)=\|\rho_0^{-1}(\rho_0-1)\|_{L^\infty(\vv)}+\|u_0\|_{L^\infty(\vv)}.
\end{equation*}

If the initial data is such that $F(0;\vv)=0$, then 
\begin{equation*}
\alpha(\vv)=\sup\left\{0\le\alpha\le1: F(\alpha;\vv)=0\right\}\ge(1+C(\gamma) m )^{-1}, 
\end{equation*}
where the constant $C(\gamma)>0$ depends only on $\gamma$.

Furthermore, if $\alpha(\vv)<1$, then 
\begin{equation*}
F(\alpha;\vv)\ge F(\alpha(\vv)^+;\vv)>0, \quad \alpha(\vv)< \alpha\le1.
\end{equation*}

Finally, under the assumption $F(0;\vv)=0$, if for some
$0<\alpha\le1$ the weak solution $(\rho,u)$ admits a
continuous representative on $\vv_\alpha$, then the disturbance
vanishes a.e.\ in $\vv_\alpha$.

\end{theorem}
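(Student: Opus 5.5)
We propose to prove Theorem~\ref{thm2} by a localized relative-entropy argument with the constant state $(1,0)$ as reference. The test functions will be built from the Lipschitz cutoff $d_\alpha$ itself, and we will avoid Gronwall by exploiting the sign of the resulting flux term.

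\emph{The relative entropy.} For the energy density $\eta(\rho,u)=\half\rho|u|^2+\tfrac{1}{\gamma(\gamma-1)}\rho^\gamma$, we form the relative entropy with respect to $(1,0)$:
\[
E=\half\rho|u|^2+\tfrac1{\gamma(\gamma-1)}\bigl(\rho^\gamma-1-\gamma(\rho-1)\bigr).
\]
This is a combination of the energy inequality \eqref{admissible} and the mass equation \eqref{weakformulation1}, the latter applied with test function $-\tfrac1{\gamma-1}\varphi$. Hence, for every nonnegative Lipschitz $\varphi$,
\[
\iint_{S_T}\bigl(E\,\partial_t\varphi+Q\cdot\nabla_x\varphi\bigr)dxdt+\int E_0\varphi(0,\cdot)dx\ge0,
\]
where the relative flux is $Q=\bigl(\half\rho|u|^2+\tfrac1{\gamma-1}(\rho^\gamma-\rho)\bigr)u$. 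The assumption $F(0;\vv)=0$ gives $E_0=0$ on $\vv$.

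\emph{Choice of test function.} We take $\varphi=\chi(t)\,d_\beta(t,x)$ with $\beta\le\alpha$ and $\chi$ a time cutoff; this is admissible because $d_\beta$ is Lipschitz and compactly supported in $x$. A.e. we have $\partial_t d_\beta=-\beta^{-1}$ and $|\nabla d_\beta|\le1$ on the support. Using these, the inequality becomes
\[
\iint \chi\,\bigl(-\beta^{-1}E+|Q|\bigr)\mathbf 1_{\vv_\beta}+\dots\ge0 .
\]

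\emph{Key pointwise estimate.} The central claim is
\[
|Q|\le (1+C(\gamma)m)\,E
\]
pointwise whenever the bound \eqref{ellinfbound} holds. To verify it, write $\rho=1+r$ with $|r|\le m\rho$. The pressure part $\tfrac1{\gamma-1}(\rho^\gamma-\rho)|u|$ can be compared with $\half\rho|u|^2+c\,\rho^{\gamma-2}r^2$ by Young's inequality. The lowest-order interplay is $|r||u|\le \half(r^2+|u|^2)$, which yields the factor $1$ (the sound speed). The remaining terms carry an extra factor of $|u|$ or $|r|/\rho$, bounded by $m$. The assumption $\rho^{-1}\in L^\infty$ is what makes $\rho^{\gamma-2}$ and the Taylor remainders uniformly comparable; I expect this to be the main technical obstacle, since the constant must depend only on $\gamma$ and not on $\|\rho^{-1}\|_\infty$.

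\emph{Conclusion for the first claim.} Given the estimate, for $\beta^{-1}\ge 1+C(\gamma)m$ the bulk integrand is $\le 0$. Letting $\chi$ approximate $\mathbf 1_{[0,t_1]}$ (taking $t_1$ Lebesgue times) produces a nonpositive time-boundary term of the form
\[
-\int E(t_1)\,d_\beta(t_1,\cdot)\,dx .
\]
Together with $E_0=0$ on $\vv$, this forces $\int E(t_1)d_\beta\,dx\le0$. Since $E\ge0$ and $E=0$ exactly at $(1,0)$, the disturbance vanishes a.e. on $\vv_\beta$. This gives $\alpha(\vv)\ge(1+C m)^{-1}$, and the argument is quantitative because the bulk term can be kept as an estimate.

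\emph{The second claim.} Monotonicity and left-continuity of $F$ are elementary, since the sets $\vv_\alpha$ increase with $\alpha$. If $F(\alpha(\vv)^+)=0$, we redo the argument with the local bound $m'=F(\alpha')$ for some $\alpha'>\alpha(\vv)$, restricting the inequality to $\vv_{\alpha'}$. When $m'$ is small, the dichotomy $F=0$ versus $F>0$ together with the local version of the estimate pushes $\alpha(\vv)$ beyond its supremum, which is a contradiction. The subtle point is that vanishing of $F$ just beyond $\alpha(\vv)$ must be shown to propagate; I would handle this by using $\vv_{\alpha'}$ with $m'$ replaced by $\eps\to0$, where the estimate gives speed $1+C\eps$.

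\emph{The third claim.} If the solution is continuous on $\vv_\alpha$, we apply the same continuation argument at every level. Continuity makes $F(\beta)\to0$ as $\beta\downarrow\alpha(\vv)$, so the second claim forces $\alpha(\vv)\ge\alpha$.
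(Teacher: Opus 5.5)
Your architecture is the paper's. You use the same relative energy $E$ (the paper's $\mathcal X$) and flux $Q$ (the paper's $\mathcal Y$), and the same test functions $d_\beta$ times a time cutoff. You also replace $m$ by the local quantity $F(\alpha';\vv)$ on $\vv_{\alpha'}$ to force the jump at $\alpha(\vv)$, exactly as the paper does, and you use the same continuity argument for the last claim. You read the conclusion off the top-time term $-\int E(t_1)\,d_\beta(t_1,\cdot)\,dx$, whereas the paper uses the bulk inequality $(\alpha^{-1}-1)\iint_{\vv_\alpha}E\le C(\gamma)F(\alpha;\vv)\iint_{\vv_\alpha}E$. That is a harmless variant, though the paper's form also gives the explicit bound $F(\alpha(\vv)^+;\vv)\ge(1-\alpha(\vv))/(C(\gamma)\alpha(\vv))$.

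The genuine gap is the step you flag yourself. The pointwise bound $|Q|\le E+C(\gamma)(\rho^{-1}|\rho-1|+|u|)E$, with $C$ depending only on $\gamma$, is asserted but not proved, and the route you indicate does not deliver it.
\begin{itemize}
\item With the unweighted inequality $|r||u|\le\half(r^2+|u|^2)$, you must absorb $\half r^2$ into $(1+Cm)E$. For $1<\gamma<2$ and $\rho\to\infty$ with $u$ bounded, $E\sim\rho^\gamma\ll r^2$, so this fails. Once $m\ge1$, the constraint $|r|\le m\rho$ gives no upper bound on $\rho$, so the failure cannot be excluded.
\item The hypothesis $\rho^{-1}\in L^\infty$ does not help here beyond making $m$ finite. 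Since $\rho^{-1}\le 1+m$ anyway, letting $\|\rho^{-1}\|_{L^\infty}$ enter the constant would destroy the form $(1+C(\gamma)m)^{-1}$.
\end{itemize}

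The paper closes the estimate in three steps.
\begin{itemize}
\item Since $\tfrac1{\gamma-1}(\rho^\gamma-\rho)=(\rho-1)+\tfrac{\gamma}{\gamma-1}\Phi(\rho-1)$, one has $Q=(\rho-1)u+\bigl(\half\rho|u|^2+\tfrac{\gamma}{\gamma-1}\Phi(\rho-1)\bigr)u$. The second piece is at most $\gamma|u|E$.
\item For the first piece, use the weighted inequality $|(\rho-1)u|\le\half\rho|u|^2+\half\rho^{-1}(\rho-1)^2=E+\mathcal Z$, where $\mathcal Z=\half\rho^{-1}(\rho-1)^2-\tfrac1{\gamma-1}\Phi(\rho-1)$.
\item Finally, $|\mathcal Z|\le C(\gamma)\rho^{-1}|\rho-1|\Phi(\rho-1)$ for all $\rho>0$. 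The ratio $\rho\mathcal Z/\bigl((\rho-1)\Phi(\rho-1)\bigr)$ is continuous on $(0,\infty)$, since at $\rho=1$ $\mathcal Z$ vanishes to third order and $\Phi$ to second. It also has finite limits as $\rho\to0$ and $\rho\to\infty$.
\end{itemize}
With this lemma in place, your argument goes through.

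One further detail for the last claim: a continuous representative yields right-continuity of $F$ only through uniform continuity. So restrict to $S_{T'}$ with $T'<T$ when $\vv_\alpha$ is not compact, then let $T'\uparrow T$.
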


\begin{corollary}[Finite propagation into the background]

\label{cor1}

Suppose that $(\rho,u)$ is an entropy-admissible weak solution on $S_T$
with initial data $(\rho_0,u_0)$, assume that
$\|\rho^{-1}\|_{L^\infty(S_T)}<\infty$,  let $m>0$ satisfy
\eqref{ellinfbound}, and
let $c_0= 1+C(\gamma)m$,
with $C(\gamma)$ as in Theorem~\ref{thm2}.

If for some $R>0$ the initial disturbance satisfies
 \begin{equation*}
\esupp(\rho_0-1,u_0)
\subset\{x\in\rr^3:|x|\le R\},
\end{equation*}
then the disturbance satisfies
\begin{equation*}
\esupp(\rho-1,u)
\subset\{(t,x)\in\st:|x|\le R+c_0 t\}.
\end{equation*}

More generally, let $0<\sigma<T$ be a Lebesgue time for
$(\rho,u)$, and let $(\rho_\sigma,u_\sigma)$ denote the corresponding
Lebesgue value.  If for some $R>0$
\[
        \esupp(\rho_\sigma-1,u_\sigma)
        \subset \{x\in\rr^3: |x|\le R\},
\]
then
\begin{multline*}
\esupp(\rho-1,u)\cap\bigl((\sigma,T)\times\rr^3\bigr)\\
\subset
\left\{
(t,x)\in(\sigma,T)\times\rr^3:
|x|\le R+c_0(t-\sigma)
\right\}.
\end{multline*}

\end{corollary}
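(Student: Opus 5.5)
The plan is to derive Corollary~\ref{cor1} from Theorem~\ref{thm2}, applied to a family of small closed sets $\vv$ lying outside the initial support. Set $\alpha_0=(1+C(\gamma)m)^{-1}=1/c_0$. Fix a point $(t_*,x_*)\in\st$ with $|x_*|>R+c_0t_*$. We must show that $(\rho-1,u)$ vanishes a.e.\ in some neighborhood of this point.

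\textbf{Choice of $\vv$.} Pick $r$ with $c_0t_*<r<|x_*|-R$. Let $\vv$ be the closed ball $\overline{B}(x_*,r')$ with $r<r'<|x_*|-R$. Then $\vv$ is closed, bounded, has Lipschitz boundary, and is disjoint from $\{|x|\le R\}$. Since $\esupp(\rho_0-1,u_0)\subset\{|x|\le R\}$, we have $\rho_0=1$ and $u_0=0$ a.e.\ on $\vv$. Hence $F(0;\vv)=0$.

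\textbf{Applying Theorem~\ref{thm2}.} The theorem gives $\alpha(\vv)\ge\alpha_0$. By left-continuity and monotonicity of $F$, we get $F(\alpha_0;\vv)=0$. Thus $(\rho-1,u)=0$ a.e.\ on $\vv_{\alpha_0}$, which is the closure of
\[
\{(t,x)\in\st:\dist(x,\rr^3\setminus\vv)>c_0t\}.
\]
For a ball, $\dist(x,\rr^3\setminus\vv)=r'-|x-x_*|$. So $\vv_{\alpha_0}$ contains the open cone $\{|x-x_*|<r'-c_0t\}$, and this cone contains $(t_*,x_*)$ together with a spacetime neighborhood of it, since $r'>c_0t_*$. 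Hence $(t_*,x_*)\notin\esupp(\rho-1,u)$.

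\textbf{Conclusion of the first statement.} Since $(t_*,x_*)$ was arbitrary, the complement of $\{|x|\le R+c_0t\}$ in $\st$ misses the essential support. This proves the first statement.

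\textbf{The shifted statement.} For the Lebesgue time $\sigma$, the preliminaries in Section~\ref{notation} show that the restriction of $(\rho,u)$ to $[\sigma,T)\times\rr^3$ is an entropy-admissible weak solution with initial data $(\rho_\sigma,u_\sigma)$. The bounds $\|\rho^{-1}\|_{L^\infty}<\infty$ and \eqref{ellinfbound} persist on the sub-strip with the same $m$. After translating time by $\sigma$, we apply the first part verbatim with $T$ replaced by $T-\sigma$. Translating back gives the claimed inclusion on $(\sigma,T)\times\rr^3$.

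\textbf{Main obstacle.} The only delicate point is the passage from $\alpha(\vv)\ge\alpha_0$, which is a supremum, to $F(\alpha_0;\vv)=0$. This uses the left-continuity of $F$ asserted in Theorem~\ref{thm2}. Alternatively, one can avoid it by working with any $\alpha<\alpha_0$ close to $\alpha_0$, absorbing the loss into the strict inequality $r'>c_0t_*$, since $1/\alpha$ can be taken arbitrarily close to $c_0$.
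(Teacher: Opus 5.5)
Your proof is correct and is essentially the paper's argument. You apply Theorem~\ref{thm2} to a compact Lipschitz set $\vv$ on which the initial disturbance vanishes, use $\alpha(\vv)\ge c_0^{-1}$ together with the monotonicity and left-continuity of $F$ to get $\rho=1$, $u=0$ a.e.\ on $\vv_{c_0^{-1}}$, and handle the Lebesgue-time case by restricting to $[\sigma,T)\times\rr^3$ and translating in time. The only difference is the choice of $\vv$. The paper takes the annuli $\{R\le|x|\le N\}$, for which $\vv_{c_0^{-1}}=\{R+c_0t\le|x|\le N-c_0t\}$, and lets $N\to\infty$. You instead take small closed balls around each point with $|x_*|>R+c_0t_*$, which makes the essential-support conclusion explicitly local; your variant with $\alpha$ slightly below $c_0^{-1}$ also avoids left-continuity.
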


\begin{theorem}[Super-classical penetration and jump profile]
\label{thm3}

Suppose that $(\rho,u)$ is an entropy-admissible weak solution on $S_T$
with initial data $(\rho_0,u_0)$, assume that
$\|\rho^{-1}\|_{L^\infty(S_T)}<\infty$,  let $m>0$ satisfy
\eqref{ellinfbound}, and
let $c_0= 1+C(\gamma)m$,
with $C(\gamma)$ as in Theorem~\ref{thm2}.

Assume that the initial disturbance satisfies 
\begin{equation}
\label{esupp0.0}
\esupp(\rho_0-1,u_0)
\subset \{x\in\rr^3:|x|\le1\}.
\end{equation}
Fix a unit vector $\omega\in\rr^3$ and $0\le\kz<1$, and let
 $w_0$ be defined by \eqref{wzdef}.  Assume that $w_0$ satisfies \eqref{wkdatacond}.

Assume that the lifespan satisfies $T>T_0(\kz)$, where $T_0(\kz)$ was
defined in \eqref{etub0}, \eqref{etub}.  
Set
\[
\mathcal T
=
\left\{
0<t<T:
\esupp(\rho-1,u)\cap
\left\{(s,x)\in S_t:|x|>1+s\right\}
\ne\emptyset
\right\}.
\]
Then
$\mathcal T\ne\emptyset$, and, setting
\[
 \tau=\inf\mathcal T,
 \]
we have\[
0\le\tau\le T_0(\kz).
\]

Moreover, there holds
\begin{subequations}
\begin{equation}
\label{nospread}
\esupp(\rho-1,u)\cap\st[\tau]\subset\{(t,x)\in \st[\tau]:|x|\le1+t\},
\end{equation}
and for every $\tau<\tau'<T$,
\begin{multline}
\label{exteriorsupp}
\esupp(\rho-1,u)\cap\left((\tau,\tau')\times\rr^3\right)\\
\subset \biggl\{(t,x)\in(\tau,\tau')\times\rr^3:
|x|\le 1+\tau+c_0(t-\tau)\biggr\}
\end{multline}
while
\begin{multline}
\label{spread0}
\esupp(\rho-1,u)
\cap\biggl\{(t,x)\in(\tau,\tau')\times\rr^3:\\
1+t<|x|\le 1+\tau+c_0(t-\tau)\biggr\}\ne\emptyset.
\end{multline}
\end{subequations}

For every $\tau<\tau'<T$, there exist a unit vector
$\nu\in\rr^3$ and a speed $c_1$ satisfying
\[
1<c_1\le c_0
\]
such that, upon setting
\begin{equation}
\label{jumpplane}
\mathcal H_{\tau,\tau'}^{\nu,c}
=
\left\{
(t,x)\in (\tau,\tau')\times\rr^3:
\nu\cdot x\ge1+\tau+c(t-\tau)
\right\},
\end{equation}
for $c\ge1$, the nonincreasing profile
\[
g(c)
=
\|\rho-1\|_{L^\infty\left(\mathcal H_{\tau,\tau'}^{\nu,c}\right)}
+
\|u\|_{L^\infty\left(\mathcal H_{\tau,\tau'}^{\nu,c}\right)},
\qquad 1\le c<\infty,
\]
has a jump discontinuity at $c=c_1$, with
\[
g(c_1^-)>g(c_1^+)=0.
\]

\end{theorem}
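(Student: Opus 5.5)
The plan has three stages: nonemptiness of $\mathcal T$ and the bound on $\tau$, then the support statements \eqref{nospread}--\eqref{spread0}, and finally the jump of $g$.

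\textbf{Step 1: $\mathcal T\ne\emptyset$ and $\tau\le T_0(\kz)$.} Argue by contradiction with Theorem~\ref{thm1}. Suppose $\mathcal T\cap(0,T_1)=\emptyset$ for some $T_1$ with $T_0(\kz)<T_1<T$. Then $\esupp(\rho-1,u)\cap S_{T_1}\subset\{|x|\le1+t\}$, so in particular \eqref{esupp} holds on $S_{T_1}$. The restriction of $(\rho,u)$ to $S_{T_1}$ is still a weak solution with the same data (restricting test functions is harmless), and \eqref{wkdatacond} holds. Theorem~\ref{thm1} then gives $T_1\le T_0(\kz)$, a contradiction. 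Hence $\mathcal T\ne\emptyset$ and $\tau\le T_0(\kz)$.

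\textbf{Step 2: support statements.} For \eqref{nospread}: every $t<\tau$ lies outside $\mathcal T$, so the essential support in $S_t$ stays in the cone. Taking a union over $t\uparrow\tau$ gives the inclusion on $S_\tau$; the case $\tau=0$ is trivial.

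For \eqref{exteriorsupp}, apply Corollary~\ref{cor1} from a Lebesgue time $\sigma$ close to $\tau$ with $\sigma<\tau$. By \eqref{nospread}, and since a Lebesgue value is an $L^1_{\rm loc}$ limit of time averages, $\esupp(\rho_\sigma-1,u_\sigma)\subset\{|x|\le1+\sigma\}$. The corollary then gives support in $|x|\le1+\sigma+c_0(t-\sigma)$. Letting $\sigma\uparrow\tau$ along Lebesgue times (a.e.\ time is one) and intersecting yields \eqref{exteriorsupp}; when $\tau=0$, use the initial data directly.

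For \eqref{spread0}: by the definition of the infimum there is $t\in\mathcal T$ with $t<\tau'$. Any point of $\esupp$ with $|x|>1+s$ and $s<t$ must have $s\ge\tau$ by \eqref{nospread}, and the point lies in the region of \eqref{spread0} by \eqref{exteriorsupp}. A little care is needed at boundary times, handled by openness of the complement of the support.

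\textbf{Step 3: jump profile (main obstacle).} From \eqref{spread0}, pick a point $(t_*,x_*)$ in the essential support with $\tau<t_*<\tau'$ and $1+t_*<|x_*|$, and set $\nu=x_*/|x_*|$. Then $\nu\cdot x_*>1+t_*$, so $\mathcal H^{\nu,c}$ contains a neighborhood of this point for $c$ slightly above $1$, and thus $g(c)>0$ for some $c>1$. By \eqref{exteriorsupp}, $\nu\cdot x\le|x|\le1+\tau+c_0(t-\tau)$ on the support, so $g(c)=0$ for $c>c_0$. Define $c_1=\inf\{c:g(c)=0\}\in(1,c_0]$. Since $g$ is nonincreasing and vanishes on $(c_1,\infty)$, we get $g(c_1^+)=0$.

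The real content is $g(c_1^-)>0$, which is the statement that $g$ cannot decrease continuously to $0$. I would try to get this from the last two assertions of Theorem~\ref{thm2}: $F(\alpha)\ge F(\alpha(\vv)^+)>0$ past the threshold, so the profile jumps. Apply Theorem~\ref{thm2} on the shifted strip starting at a Lebesgue time $\sigma$ near $\tau$, with $\vv$ a large bounded Lipschitz set (say a truncated half-space slab) inside $\{\nu\cdot x\ge1+\sigma+\cdots\}$ where the Lebesgue value vanishes. The sets $\vv_\alpha$ are then the regions $\{\nu\cdot x\ge\text{const}+(t-\sigma)/\alpha\}$ truncated, so $\alpha$ corresponds to $1/c$. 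The weighted quantity $\rho^{-1}(\rho-1)$ is comparable to $\rho-1$ because $\rho,\rho^{-1}$ are bounded, so the vanishing and positivity of $F$ transfer to $g$. The jump $F(\alpha)\ge F(\alpha(\vv)^+)>0$ for $\alpha>\alpha(\vv)$ becomes $g(c)\ge\delta>0$ for $c<c_1$, i.e.\ $g(c_1^-)>0$.

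The delicate parts are matching the threshold $\alpha(\vv)^{-1}$ with $c_1$ exactly, which requires passing $\sigma\to\tau$ and the truncation radius to infinity, and using the finite-speed bound to control the lateral truncation. If exact matching fails, I would instead define $\nu,c_1$ directly from the Theorem~\ref{thm2} threshold for a suitable planar $\vv$, using \eqref{spread0} to guarantee $c_1>1$.
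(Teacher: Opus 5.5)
Your Steps 1 and 2 are correct and essentially follow the paper. So is the first half of Step 3: the choice of $\nu$, $g>0$ just above $c=1$, $g=0$ beyond $c_0$, and hence $c_1\in(1,c_0]$ with $g(c_1^+)=0$.

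\textbf{The gap.} Nothing you wrote actually yields $g(c_1^-)>0$. Restarting Theorem~\ref{thm2} at a Lebesgue time $\sigma<\tau$ with $\vv^\sigma=\{\nu\cdot x\ge1+\sigma,\ |x|\le R\}$ gives a jump for a profile $f^\sigma$ built on the sets $\{\nu\cdot x\ge 1+\sigma+c(t-\sigma)\}$. For $t>\tau$ these sets read $\{\nu\cdot x\ge 1+\tau+c(t-\tau)+(c-1)(\tau-\sigma)\}$, which are strictly smaller than $\mathcal H^{\nu,c}_{\tau,\tau'}$ when $c>1$. Consequences:
\begin{itemize}
\item The threshold $c_1^\sigma=\alpha(\vv^\sigma)^{-1}$ only satisfies $c_1^\sigma\le c_1$, possibly strictly.
\item The qualitative conclusion $F(\alpha;\vv^\sigma)\ge F(\alpha(\vv^\sigma)^+;\vv^\sigma)>0$ gives a jump height $\delta_\sigma>0$ with no control as $\sigma\uparrow\tau$. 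The $\delta$ in your final sentence depends on $\sigma$ and could tend to $0$.
\item Your fallback, defining $c_1$ from a $\sigma$-based planar $\vv$, proves a jump for the wrong family of sets, not for $\mathcal H^{\nu,c}_{\tau,\tau'}$.
\item Taking $\sigma=\tau$ is unavailable, since $\tau$ need not be a Lebesgue time.
\end{itemize}

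\textbf{How the paper closes it.} The paper proves a version of Theorem~\ref{thm2} based at $\tau$ itself, with no Lebesgue time. Take $\vv=\{\nu\cdot x\ge1+\tau,\ |x|\le R\}$. The disturbance vanishes a.e.\ on $\vv$ at $t=0$ by \eqref{esupp0.0}, and on $(0,\tau)\times\vv$ by \eqref{nospread}. Hence one may insert $h((t^\ast-t)/\eps)\,d_\alpha$, with $d_\alpha=(\dist(x,\rr^3\setminus\vv)-(t-\tau)_+/\alpha)_+$, into \eqref{enineq1} on the whole strip. All contributions from $t<\tau$ vanish, and the rest of the proof of Theorem~\ref{thm2} runs unchanged. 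With $R$ large, \eqref{exteriorsupp} makes the radial cutoff inactive. Then $F(c^{-1};\vv)$ is exactly the $\rho^{-1}(\rho-1)$-version $f(c)$ of $g(c)$, and $c_1=\alpha(\vv)^{-1}$ together with the jump follow directly.

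\textbf{Rescuing your route.} Your limiting scheme can also be saved, but only through the quantitative dichotomy contained in \eqref{point}: for $0<\alpha<1$, either $F(\alpha)=0$ or $F(\alpha)\ge(1-\alpha)/(C(\gamma)\alpha)$. In terms of $c=1/\alpha$, this reads $f^\sigma(c)\in\{0\}\cup[(c-1)/C(\gamma),\infty)$. As $\sigma\uparrow\tau$ we have $f^\sigma(c)\uparrow f(c)$, for two reasons:
\begin{itemize}
\item the $\sigma$-sets increase to $\mathcal H^{\nu,c}_{\tau,\tau'}$ up to a null set;
\item the slab $(\sigma,\tau)$ contributes nothing, by \eqref{nospread}.
\end{itemize}
So the same dichotomy holds for $f$. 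This gives $f(c)\ge(c-1)/C(\gamma)$ for $1<c<c_1$, and comparability of $f$ and $g$ then yields $g(c_1^-)>0$. The mere positivity of the jump, as stated in Theorem~\ref{thm2}, is not enough for this limit.
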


\begin{remark}
The jump discontinuity in Theorem~\ref{thm3} is consistent with shock
formation, although the theorem does not by itself identify a shock
hypersurface.  
\end{remark}

\section{Front Regularity and the Onset of Super-Classical Propagation}

\label{frontreg}

Fix a unit vector $\omega\in\rr^3$ and a parameter $0\le\kz<1$, and assume that the standing hypotheses of Theorem~\ref{thm3} hold, apart
from the condition $T>T_0(\kz)$.
Then these hypotheses hold for $\kz$ replaced by any $\lambda$ satisfying $\kz\le \lambda<1$.
For $T_0$, defined in \eqref{etub0}, \eqref{etub}, set
\[
T^\ast=\liminf_{\lambda\uparrow1}T_0(\lambda).
\]
Assume that $T^\ast<T$. Choose a sequence $\{\lambda_n\}_{n\in\mathbb N}$ in $(\kz,1)$ such that
\[
\lambda_n\uparrow1,\quad
T_0(\lambda_n)\to T^\ast, \isp{and }
T_0(\lambda_n)<T.
\]
 Theorem~\ref{thm3} applies  with $\kz$ replaced by $\lambda_n$ for each $n$, and gives
 \[
 \tau\le T_0(\lambda_n).
 \]
Since the transition time $\tau$ is defined solely in terms of the solution, it
 is independent of $n$.
Letting $n\to\infty$, we obtain
\[
0\le\tau\le T^\ast.
\]
In particular, if $T^\ast=0$, then $\tau=0$, signaling instantaneous 
penetration beyond the classical unit cone.

We now give a sufficient condition on the behavior of the initial data
near the front which ensures $T^\ast =0$. 
To isolate this effect, assume that
\[
(\rho_0,u_0)\in C^{j,\mu}(\Omega^{\kz,\infty}(0)),
\qquad
j\in\mathbb Z_{\ge0},\quad \mu\in(0,1],
\]
and that, for some $0<\varepsilon\ll1$,
\[
\omega\cdot\rho_0u_0(x)+\rho_0(x)-1
\sim
\varepsilon(1-|x|^2)^{j+\mu},
\qquad x\in\Omega^{\kz,1}(0),
\]
while
\[
\omega\cdot\rho_0u_0(x)+\rho_0(x)-1=0,
\qquad
x\in\Omega^{\kz,\infty}(0)\setminus\Omega^{\kz,1}(0).
\]
Here $\sim$ denotes two-sided comparability, with constants
independent of $\varepsilon$.

By the definition of $w_0$ in \eqref{wzdef}, this assumption implies
that $w_0(k)>0$ for $\kz \le k<1$, and
\[
w_0(k)\sim\varepsilon(1-k)^{4+j+\mu},
\qquad \kz\le k<1,
\]
with constants independent of $k$ and $\varepsilon$.
The positivity of the integrand also gives the monotonicity
assumption in \eqref{wkdatacond}.

Choose $\kz\le\lambda<1$ and let $k=(1+\lambda)/2$. The preceding estimate for $w_0$,
together with \eqref{etub0}, \eqref{etub}, gives
\begin{equation}
\label{tzex}
\begin{aligned}
T_0(\lambda)
&\le T((1+\lambda)/2,\lambda)-(1+\lambda)/2  \\
&\le
\exp\left(
        \Gamma'(1-\lambda)^{1-j-\mu}/\varepsilon
        \right)
        -(1+\lambda)/2,
\end{aligned}
\end{equation}
where $\Gamma'$ is independent of $\lambda$ and
$\varepsilon$.

If the data is merely H\"older continuous at the front, namely
$(\rho_0,u_0)\in C^{0,\mu}$ with $0<\mu<1$, then $0<j+\mu<1$.
It follows from \eqref{tzex} that
\[
        \lim_{\lambda\uparrow1} T_0(\lambda )=T^\ast=0.
\]
Thus $\tau=0$, and the loss of classical finite-speed confinement
occurs immediately. For every $0<\tau'<T$, Theorem~\ref{thm3}
produces a one-sided profile having a jump discontinuity at some
speed $1<c_1\le c_0$.

This is consistent with the elementary Burgers mechanism: entropy
solutions with $C^{0,1/2}$ initial data such as
\[
u_0(x)=
\begin{cases}
1, & x\le-1,\\
(-x)^{1/2}, & -1<x<0,\\
0, & x\ge0.
\end{cases}
\]
develop a shock immediately.  The analogy is only qualitative, but it
illustrates the same role played by sub-Lipschitz regularity at the
front.  Although we do not consider vacuum background states
$\bar\rho=0$ here, the explicit affine solutions in \cite{Sideris-2017}, for
which $w_0(k)\sim (1-k)^{4+\delta}$ with $0<\delta<1$, exhibit the
same  vanishing-rate mechanism behind instantaneous
spreading.

If the data is Lipschitz at the front, then $j+\mu=1$.
From \eqref{tzex}, we obtain
\[
        T^\ast\le \exp(\Gamma'\varepsilon^{-1})-1<\infty.
\]
Under the standing assumption $T>T^\ast$, we therefore have
\[
0\le\tau\le T^\ast,
\]
and for every $\tau<\tau'<T$, the jump-profile conclusion of
Theorem~\ref{thm3} holds.

For more regular data, with $j+\mu>1$, the bound \eqref{tzex} degenerates
as $\lambda\uparrow1$.  Thus the present method gives no finite upper
bound for $T^\ast$ when the initial outgoing functional vanishes faster
than the Lipschitz rate at the front.
This should not be interpreted as evidence that classical propagation
persists for smoother data; rather, the localized functional used here
becomes too small near the front to yield a uniform upper bound as the
half-space approaches the initial wave front.
This conclusion concerns only the limiting procedure $\lambda\uparrow1$.
For each fixed $\kz<1$ satisfying the hypotheses of Theorem~\ref{thm3},
persistence beyond $T_0(\kz)$ still forces the super-classical penetration
and jump profile described there.

\section{Proof of Theorem~\ref{thm1}}

\subsection*{Preliminaries}
We are given a  weak solution
$(\rho,u)$ on $S_T$ with initial data $(\rho_0,u_0)$.
The goal is to establish the  upper bound \eqref{concl} for $T$, the lifespan of the solution.  

As in \auth{Sideris} \cite{Sideris-1985}, the proof is direct: we derive a priori inequalities for an arbitrary solution satisfying the hypotheses and then read off the resulting restriction on $T$.

Following the strategy of \auth{Sideris} \cite{Sideris-1985},
the proof proceeds directly, by deriving a priori inequalities for an
arbitrary solution satisfying the hypotheses and then reading off the
resulting restriction on $T$.
Set $\bar k=1-\kz$. Since $T_0(\kz)\ge \bar k/2$, the desired
bound is immediate if $T<\bar k/2$. 
It remains to consider the case $T\ge \bar k/2$.
The system is rotationally invariant, so we may assume without loss of generality that
$\omega = e_1$ and write $x_1=e_1\cdot x$.  

An important role in the proofs of both 
Theorems \ref{thm1} and \ref{thm2} will be played by the function
 \begin{equation}
\label{Phi1}
\begin{aligned}
\Phi(\eta) =\ &p(1+\eta)-p(1)-p'(1)\eta\\
\ =\ &
\tfrac1\gamma(1+\eta)^\gamma-\tfrac1\gamma-\eta,
\end{aligned}
\end{equation} 
defined on the interval $1+\eta>0$.\footnote{In the special case $\gamma=2$, we have $\Phi(\eta)=\eta^2/2$.}
We note that $\Phi(0)=\Phi'(0)=0$ and $\Phi''(\eta)>0$. 
 Thus, crucially, $\Phi$ is strictly convex and nonnegative on
$(-1,\infty)$, and increasing on $[0,\infty)$.

 Having chosen $\omega=e_1$, we  identify $\rr^3$ with $\rr\times\rr^2$.
 It is then convenient to have a notation for the projections of the domains $\omt{\kz,\ko}t$
and $\omT{\kz,\ko}T$ onto the $(t,x_1)$-plane.
Thus, we define the intervals
\begin{equation*}
\It{\kz,\ko}{t}=\{s\in \rr : \kz+ t\le s\le \ko+ t\},
\end{equation*}
\begin{equation*}
\It{\kz,\infty}{t}=\{s\in \rr : \kz+ t\le s\},
\end{equation*}
and the slab
\begin{equation*}
\IT{\kz,\ko}{T}=\{(t,s)\in[0,T)\times \rr:s\in \It{\kz,\ko}{t}\}.
\end{equation*}

To  motivate the first portion of the proof, suppose temporarily that the solution belongs to
$C^1(\omT{\kz,\infty}T)$.  A direct computation then shows that the function
\begin{subequations}
\begin{equation}
\label{vdef}
v(t,s)=\int_{\{x_1>s\}}(\rho(t,x)-1)(x_1-s)^2dx
\end{equation}
belongs to $C^2(\IT{\kz,\infty}T)$ and satisfies
\begin{equation*}
(\partial_t^2-\partial_s^2)v(t,s)
=2G(t,s),
\end{equation*}
in $\IT{\kz,\infty}T$, with
\begin{equation}
\label{Gdef}
G(t,s)=\int_{\{x_1> s\}}
\big[\rho (e_1\cdot u)^2(t,x)+\Phi(\rho(t,x)-1)\big]dx.
\end{equation}
\end{subequations}
D'Alembert's formula can then be used to provide a representation for $v(t,s)$.
For weak solutions, this computation is only formal, 
but the resulting representation formula remains meaningful.
We now derive it directly from the weak formulation, yielding \eqref{wkrep}.

We retain the definitions of $v$ and $G$ in the weak setting.
To justify the manipulations below, we explicitly record some basic facts.

 \begin{lemma}{\rm (Basic Integrability and Confinement Properties)}
\label{lem1}
Suppose that $(\rho, u)\in L^\infty(\st;\rr_{>0}\times\rr^3)$ 
satisfies the localized propagation speed bound \eqref{esupp}. 
Let $v(t,s)$ and $G(t,s)$ denote the half-space integrals defined as in 
\eqref{vdef} and \eqref{Gdef}, respectively.
 Then, 
\begin{enumerate}
    \item\label{item1} \textup{(Spacetime Integrability and Support)} 
    The fluid variables satisfy $\rho-1, u \in L^1(\Omega^{\kz ,\infty}_T)$.
    The half-space integrals  satisfy $v, G \in L^1(I^{\kz ,\infty}_{T})$, and their spacetime essential supports 
    are confined such that 
    \begin{equation*}
\esupp (v,G) \cap I^{\kz,\infty}_{T} \subset I^{\kz,1}_{T}.
\end{equation*}
    \item\label{item2} \textup{(Time-Slice Properties)} For almost every fixed $t \in (0, T)$, 
    the spatial profiles satisfy 
    \[
    \rho(t,\cdot)-1, u(t,\cdot) \in L^1(\Omega^{\kz ,\infty}(t))
    \]
    and
    \[
    v(t,\cdot), G(t,\cdot) \in L^1(I^{\kz ,\infty}(t)).
    \]
     Furthermore, their spatial essential supports are confined such that:
    \begin{gather*}
    \esupp (\rho(t,\cdot)-1,u(t,\cdot))\cap\omt{\kz,\infty}t\subset\omt{\kz,1}t,
\intertext{and}
\esupp (v(t,\cdot),G(t,\cdot))\cap \It{\kz,\infty}t\subset\It{\kz,1}t.
\end{gather*}
        
    \item \label{item3} \textup{($L^1$-Norms)} For any $0 < T' \le  T$ and any parameter $k$ such that $\kz  \le k \le 1$,
    $v$ satisfies the integrability estimate
        \begin{gather*}
\|v\|_{L^1\left(\IT{k,\infty}{T'}\right)}
\le \tfrac13\iint_{\Omega^{k,1}_{T'}} |\rho-1|(x_1-t-k)^3 dx dt,
\intertext{while $G$ satisfies the identity}
\|G\|_{L^1\left(\IT{k,\infty}{T'}\right)}
=\iint_{\Omega^{k,1}_{T'}} \left[ \rho(u \cdot \omega)^2 + \Phi(\rho-1) \right](x_1-t-k) dx dt.
\end{gather*}
\end{enumerate}
\end{lemma}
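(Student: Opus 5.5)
The argument rests on three ingredients: the support hypothesis \eqref{esupp}, which confines everything to a bounded region; boundedness of $\rho,u$; and Fubini--Tonelli. Throughout, take $\omega=e_1$.

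For the first part of \eqref{item1}, note that $\Omega^{\kz,\infty}_T$ splits into $\Omega^{\kz,1}_T$ and its complement. On the complement, $\rho-1$ and $u$ vanish a.e. by \eqref{esupp}. The set $\Omega^{\kz,1}_T$ is bounded, being contained in $\{|x|\le 1+T\}\times[0,T)$, and $\rho-1,u\in L^\infty$. Hence $\rho-1,u\in L^1(\Omega^{\kz,\infty}_T)$.

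Fubini then gives \eqref{item2} for the fluid variables: for a.e.\ $t$, the slice $\rho(t,\cdot)-1,u(t,\cdot)$ is bounded, lies in $L^1(\Omega^{\kz,\infty}(t))$, and vanishes a.e.\ outside $\Omega^{\kz,1}(t)$. For this I would apply Fubini to the indicator of the null set $\{(\rho-1,u)\ne0\}\cap(\Omega^{\kz,\infty}_T\setminus\Omega^{\kz,1}_T)$. There is a minor point about essential support versus the a.e.\ vanishing set: the complement of $\Omega^{\kz,1}_T$ inside $\Omega^{\kz,\infty}_T$ is not open. However, its interior differs from it only by a null set (part of a cone boundary), so the a.e.\ vanishing still follows.

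Now fix such a $t$ and take $s\ge \kz+t$. The integrand defining $v(t,s)$ is supported, up to a null set, in $\{x_1>s,\ |x|\le 1+t\}$. This set is bounded, and it is empty whenever $s>1+t$. So $v(t,s)$ is well defined, with
\[
|v(t,s)|\le \int_{\Omega^{s-t,1}(t)}|\rho-1|(x_1-s)^2dx,
\]
and $v(t,s)=0$ for $s>1+t$. The same holds for $G$, using $\Phi(0)=0$ and the fact that $\Phi(\rho-1)$ is bounded because $\rho$ is bounded. This yields the slice support statement for $(v,G)$. Joint measurability of $v$ and $G$ in $(t,s)$ follows from Fubini, since the integrands are measurable functions of $(t,s,x)$.

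For \eqref{item3}, I would integrate in $s$ first and then in $t$, exchanging the order by Tonelli applied to the nonnegative integrands. For $G$, which is nonnegative because $\Phi\ge0$ and $\rho>0$, this gives
\[
\int_{k+t}^\infty\int_{\{x_1>s\}}[\cdots]\,dx\,ds=\int_{\{x_1>k+t\}}[\cdots](x_1-t-k)\,dx,
\]
an exact identity. Integrating over $t\in(0,T')$ and using the support confinement to replace $\{x_1>k+t\}$ by $\Omega^{k,1}(t)$ gives the stated formula, whose right side is finite by boundedness. For $v$, the same computation with $|\rho-1|$ in place of the integrand gives
\[
\int_{k+t}^{x_1}(x_1-s)^2\,ds=\tfrac13(x_1-t-k)^3,
\]
which is an inequality rather than an identity. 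The spacetime $L^1$ statement and the spacetime support statement in \eqref{item1} then follow from \eqref{item3} with $k=\kz$, $T'=T$, together with the slicewise vanishing for $s>1+t$.

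The main obstacle is only the bookkeeping with null sets, namely converting the essential-support hypothesis into a.e.\ vanishing on slices. No step is analytically difficult.
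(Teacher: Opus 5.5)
Your proposal is correct and is essentially the paper's argument: the support hypothesis gives a.e.\ vanishing off the bounded cap, boundedness then gives $L^1$, and Tonelli gives part (3). The only difference is order: the paper gets the spacetime support of $v,G$ by taking $k=1$ in part (3) and then passes to time slices, whereas you argue slicewise first via $v(t,s)=G(t,s)=0$ for $s>1+t$.

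One small refinement. Your detour through the interior of $\Omega^{k_0,\infty}_T\setminus\Omega^{k_0,1}_T$ is unnecessary, since the hypothesis already places every point of that set in the open complement of the essential support, where the disturbance vanishes a.e. Applying Fubini to that open set, whose time sections are open, is also what gives the slice essential-support inclusion at points on the plane $x_1=k_0+t$ outside the ball; a.e.\ vanishing on the closed half-space slice alone does not give this, and the paper's one-line Fubini step leaves the same point implicit.
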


\begin{proof}
The localized support condition \eqref{esupp} implies that the functions
$\rho-1$ and $u$ vanish a.e.\ in
$\omT{\kz,\infty}T\setminus \omT{\kz,1}T$.
Since $(\rho,u)\in L^\infty(\st)$ and $\omT{\kz,1}T$ has finite measure,
we have
\[
        \rho-1,\;u\in L^1(\omT{\kz,\infty}T).
\]
Also, $\Phi$ extends continuously to $[-1,\infty)$, and
$\rho$ and $u$ are bounded. Hence
\[
\rho(e_1\cdot u)^2,\;\Phi(\rho-1)
\in L^1(\omT{\kz,\infty}T).
\]
Fubini's theorem therefore implies that the half-space integrals defining
$v$ and $G$ belong to $L^1(\IT{\kz,\infty}T)$ and yields the estimate and
identity in \eqref{item3}.  Taking $k=1$ in \eqref{item3} gives
$v=G=0$ a.e.\ on $\IT{1,\infty}T$, which is equivalent to the stated
spacetime support confinement.

The corresponding time-slice assertions follow from the preceding spacetime
integrability and vanishing statements by another application of Fubini.
\end{proof}

\subsection*{Reduction to a one-dimensional representation formula}

We first show that the localization hypothesis on the solution implies the same localization for the initial data.
That is, we show that the hypothesis \eqref{esupp} of Theorem~\ref{thm1}
implies the corresponding localized support properties \eqref{esupp0.1} and \eqref{esupp0.2} for the initial data.

By \eqref{weakformulation1} and \eqref{weakformulation2},
 for any pair of test functions $(\varphi,\psi)\in
 C^{0,1}_0(S_T;\rr\times\rr^3)$, there holds the modified weak formulation
\begin{subequations}
\begin{equation}
\label{weakformulation3}
\begin{multlined}
\iint_{\st}[(\rho-1)\partial_\tau\varphi+\rho u\cdot \nabla_x\varphi]dxd\tau\\
+ \int_{\rr^3}(\rho_0-1)\varphi(0,\cdot)dx=0,
\end{multlined}
\end{equation}
and 
\begin{equation}
\label{weakformulation4}
\begin{multlined}
\iint_{\st}[\rho u\cdot\partial_\tau\psi+\rho\langle u\otimes u,\nabla_x\psi\rangle
+\Phi(\rho-1)\nabla_x\cdot\psi]dxd\tau\\
+\iint_{\st}(\rho-1)\nabla_x\cdot\psi dxd\tau
+\int_{\rr^3}\rho_0 u_0\cdot\psi(0,\cdot)dx=0.
\end{multlined}
\end{equation}
\end{subequations}

If
$\supp(\varphi,\psi)\subset \omT{\kz,\infty}T\setminus \omT{\kz,1}T$,
then by \eqref{esupp}, $\supp(\varphi,\psi)$ is disjoint from $\esupp(\rho-1,u)$, 
and so by \eqref{weakformulation3}   we see that
\begin{equation*}
\int_{\rr^3}(\rho_0-1)\varphi(0,\cdot)dx=0.
\end{equation*}
Since the values of compactly supported Lipschitz functions may be chosen arbitrarily inside
$\omT{\kz,\infty}T\setminus \omT{\kz,1}T$, the vanishing of the initial integral implies the initial support inclusion
\begin{subequations}
\begin{equation}
\label{esupp0.1}
\esupp(\rho_0-1)\cap \omt{\kz,\infty}0\subset \omt{\kz,1}0.
\end{equation}

Likewise,  by \eqref{weakformulation4}, \eqref{esupp}, and \eqref{esupp0.1},
we have
\begin{equation*}
\int_{\rr^3}u_0\cdot\psi(0,\cdot)dx=
\int_{\rr^3}\rho_0u_0\cdot\psi(0,\cdot)dx=0,
\end{equation*}
so that
\begin{equation}
\label{esupp0.2}
\esupp u_0\cap \omt{\kz,\infty}0\subset \omt{\kz,1}0.
\end{equation}
\end{subequations}

The support properties \eqref{esupp0.1} and \eqref{esupp0.2}
show that $w_0(1)=0$. We extend $w_0$ continuously to
$[\kz,\infty)$ by setting
\[
w_0(k)=0,\qquad k\ge1.
\]

By \eqref{esupp}, if the pair $(\varphi,\psi)$ satisfies the compatibility condition
\begin{subequations}
\begin{equation}
\label{compatibility1}
\supp(\varphi,\psi)\subset\omT{\kz,\infty}T\isp{and} \partial_\tau\psi+\nabla_x\varphi=0
\isp{on}
\omT{\kz,1}T,
\end{equation}
then adding \eqref{weakformulation3} and \eqref{weakformulation4}, we obtain
\begin{equation}
\begin{multlined}
\label{compatibility2.1}
\iint_{\omT{\kz,\infty}T}(\rho-1)(\partial_\tau\varphi+\nabla_x\cdot\psi) dxd\tau\\
+\iint_{\omT{\kz,\infty}T}[\rho\langle u\otimes u,\nabla_x\psi\rangle
+\Phi(\rho-1)
\nabla_x\cdot\psi]dxd\tau\\
+ \int_{\omt{\kz,\infty}0}(\rho_0-1)\varphi(0,\cdot)dx\\+\int_{\omt{\kz,\infty}0}\rho_0 u_0\cdot\psi(0,\cdot)dx=0.
\end{multlined}
\end{equation}
\end{subequations}

We now construct a family of test functions 
 $(\varphi,\psi)$, 
 consistent with the compatibility condition \eqref{compatibility1},
 that will lead to our desired representation.

For parameter values $(t,s)\in\IT{\kz,\infty}{T}$, we begin by defining
\begin{subequations}
\begin{equation}
\label{thetadef}
\theta(\tau,x_1;t,s)=\tfrac16
\left[(x_1-\tau-s+t)_+^3
-(x_1+\tau-s-t)_+^3\right].
\end{equation}
Then $\theta(\,\cdot\,;t,s)\in C^{2,1}(\rr^2)$ and 
\begin{equation}
\label{lwave}
(-\partial_\tau^2+\partial_{x_1}^2)\theta(\,\cdot\,;t,s)=0.
\end{equation}
This particular solution of the one-dimensional
wave equation is chosen because its second spatial derivative 
produces the kernels appearing in the desired representation formula.
\end{subequations}

To localize the construction in time and space, we introduce two cutoff functions.
First, let $h\in C^\infty(\rr)$ satisfy
\begin{subequations}
\begin{equation}
\label{hdef}
h(\sigma)=
\begin{cases}
1,& 1\le \sigma<\infty\\
0,&-\infty<\sigma\le0
\end{cases}
\isp{and}
h'\ge0.
\end{equation}
Note that
\begin{equation}
\label{approxid}
\int_0^1h'(\sigma )d\sigma =1 \isp{and }\int_0^1\sigma  h''(\sigma )d\sigma =-1.
\end{equation} 
\end{subequations}
This function will serve as a cut-off in time.

\begin{subequations}
Next,   choose a  spatial cut-off  $\zeta\in C^\infty_0(\rr^3)$ such that
  \begin{equation}
\label{zetasupp}
\zeta(x)=1,\isp{for all} x\in\bigcup_{0\le t<T}\omt{\kz,1}t.
\end{equation}
Thanks to the support properties \eqref{esupp}, \eqref{esupp0.1},  \eqref{esupp0.2}, and the properties
of $\zeta$ above, 
we see that
\begin{equation}
\label{nocutoff}
\zeta\;(\rho-1, u)=(\rho-1,u) \isp{a.e.\ } \omT{\kz,\infty}T,
\end{equation}
and
\begin{equation}
\label{nocutoff0}
\zeta\;(\rho_0-1,u_0)=(\rho_0-1,u_0 )\isp{a.e.\ } \omt{\kz,\infty}0.
\end{equation}
\end{subequations}

Finally, setting
\begin{equation*}
\varphi(\tau,x;t,s)=-\partial_\tau \left[h\left((t-\tau)/\eps\right)\theta(\tau,x_1;t,s)\right]\zeta(x)
\end{equation*}
{and}
\begin{equation*}
\psi(\tau,x;t,s)=\nabla \left[h\left((t-\tau)/\eps\right)\theta(\tau,x_1;t,s)\right]\zeta(x),
\end{equation*}
we have $(\varphi,\psi)(\,\cdot\,;t,s)\in C^{0,1}_0(\st;\rr\times\rr^3)$.
Since $s-t\ge\kz$, we have
\[
\supp(\varphi,\psi)\subset\omT{\kz,\infty}T.
\]
Observe  that by \eqref{zetasupp}, the formulas reduce to
\begin{equation*}
\varphi(\tau,x;t,s)=-\partial_\tau \left[h\left((t-\tau)/\eps\right)\theta(\tau,x_1;t,s)\right]
\end{equation*}
and
\begin{equation*}
\psi(\tau,x;t,s)= h\left((t-\tau)/\eps\right)\partial_{x_1}\theta(\tau,x_1;t,s)e_1,
\end{equation*}
on $\omT{\kz,1}T$.
Thus, the compatibility condition \eqref{compatibility1} is satisfied, and
\begin{multline*}
\partial_\tau\varphi(\tau,x;t,s)+\nabla\cdot\psi(\tau,x;t,s)\\
=(-\partial_\tau^2+\partial_{x_1}^2)[ h\left((t-\tau)/\eps\right)\theta(\tau,x_1;t,s)].
\end{multline*}

In view of  \eqref{nocutoff} and \eqref{nocutoff0},  substitution
of $(\varphi,\psi)$ into \eqref{compatibility2.1} produces an equation of the form
\begin{equation*}
V(t,s,\eps)=W_1(t,s,\eps)+W_2(t,s,\eps),
\end{equation*}
with
\begin{equation*}
V(t,s,\eps)
=\iint\limits_{\omT{\kz,\infty}T}(\rho-1)(\partial_\tau^2-\partial_{x_1}^2)
[ h\left((t-\tau)/\eps\right)\theta(\tau,x_1;t,s)] dxd\tau,
\end{equation*}
\begin{multline*}
W_1(t,s,\eps)=-\int_{\omt{\kz,\infty}0}(\rho_0-1)\partial_\tau \left[h\left((t-\tau)/\eps\right)\theta(\tau,x_1;t,s)\right]\big|_{\tau=0}dx\\
+\int_{\omt{\kz,\infty}0}(e_1\cdot \rho_0 u_0) h\left((t-\tau)/\eps\right)\partial_{x_1}\theta(\tau,x_1;t,s)\big|_{\tau=0}dx,
\end{multline*}
and 
\begin{multline*}
W_2(t,s,\eps)\\
=\iint\limits_{\omT{\kz,\infty}T}[\rho(e_1\cdot u)^2+\Phi(\rho-1)]
h\left((t-\tau)/\eps\right)
\partial_{x_1}^2\theta(\tau,x_1;t,s) dxd\tau.
\end{multline*}
Fix $(t,s)\in\IT{\kz,\infty}T$ and take $0<\eps<t$.
We evaluate the three terms separately before passing to the limit
$\eps\to0$.

Consider the expression $W_1$ first.  
Since $t/\eps>1$, we have $h(t/\eps)=1$ and $h'(t/\eps)=0$.   So using the definition of $\theta$
in \eqref{thetadef}, we obtain 
\begin{align*}
W_1(t,s,\eps)\ =\
&
\int\limits_{\omt{\kz,\infty}0}(\rho_0-1)\half 
[(x_1-s+t)_+^2+(x_1-s-t)_+^2]\; dx\\
&
+\int\limits_{\omt{\kz,\infty}0} (e_1\cdot\rho_0 u_0)\half 
[(x_1-s+t)_+^2-(x_1-s-t)_+^2]\; dx\\
\ =\ &W_1(t,s,0),
\end{align*}
for all $0<\eps<t$.
Upon rearrangement, we get
\begin{equation*}
W_1(t,s,0)=v_0(t,s),
\end{equation*}
with
\begin{equation*}
\begin{aligned}
v_0(t,s)\ = \
&
\half\int\limits_{\omt{\kz,\infty}0}[(\rho_0-1)+e_1\cdot\rho_0 u_0](x_1-s+t)_+^2\; dx\\
&+\half\int\limits_{\omt{\kz,\infty}0}[(\rho_0-1)-e_1\cdot\rho_0 u_0](x_1-s-t)_+^2\; dx\\
\ = \
&
\half\int\limits_{\{x_1>s-t\}}[(\rho_0-1)+e_1\cdot\rho_0 u_0](x_1-s+t)^2\; dx\\
&+\half\int\limits_{\{x_1>s+t\}}[(\rho_0-1)-e_1\cdot\rho_0 u_0](x_1-s-t)^2\; dx.
\end{aligned}
\end{equation*}
By the support properties \eqref{esupp0.1} and \eqref{esupp0.2},
$v_0$  is continuous on $\IT{\kz,\infty}T$, 
and $\supp v_0\cap\IT{\kz,\infty}T\subset
\IT{\kz,1}T$.  Notice that 
\begin{equation}
\label{wzsimplification}
v_0(t,s)=w_0(s-t),\isp{when} s+t\ge1,
\end{equation}
where $w_0$ is the function defined in \eqref{wzdef}.

Next, we turn to $W_2$.  From \eqref{hdef},
we see that the  integration domain is contained in
$\omT{\kz,\infty}t$.
By the dominated convergence theorem, we have
\begin{align*}
\lim_{\eps\to0} W_2(t,s,\eps)
\ = \ & 
\iint_{\omT{\kz,\infty}t}
[\rho(e_1\cdot u)^2+\Phi(\rho-1)]\partial_{x_1}^2\theta(\,\cdot\,;t,s) dxd\tau\\
\ =\ & W_2(t,s,0).
\end{align*}
Now by the definition of $\theta$ in  \eqref{thetadef},
\begin{equation*}
\partial_{x_1}^2\theta(\tau,x_1;t,s)=(x_1-\tau-s+t)_+-(x_1+\tau-s-t)_+.
\end{equation*}
Thus, we are led to consider integrals of the form
\begin{equation*}
\iint_{\omT{\kz,\infty}t}
[\rho(e_1\cdot u)^2+\Phi(\rho-1)](x_1-\tau-k)_+ dxd\tau,
\end{equation*}
with $k\ge \kz$.
Lemma~\ref{lem1}, \eqref{item3} implies that
\begin{equation*}
\begin{aligned}
\iint_{\omT{\kz,\infty}t}&
[\rho(e_1\cdot u)^2+\Phi(\rho-1)](x_1-\tau-k)_+\; dxd\tau\\
&=\int_0^t\int_{k+\tau}^\infty G(\tau,\xi)d\xi d\tau,
\end{aligned}
\end{equation*}
for $\kz\le k\le1$, but in fact, this holds for all $\kz\le k$, since both sides vanish for $k\ge1$,
by Lemma  \ref{lem1}.

Applying the same Fubini calculation to the lower limits
$s-t+\tau$ and $s+t-\tau$, we obtain
\begin{align*}
W_2(t,s,0)\ =\ &\int_0^t\int_{s-t+\tau}^\infty G(\tau,\xi)d\xi d\tau-
\int_0^t\int_{s+t-\tau}^\infty G(\tau,\xi)d\xi d\tau\\
\ =\ &\iint_{K(t,s)}G(\tau,\xi)d\xi d\tau,
\end{align*}
where
\begin{equation*}
K(t,s)=\{(\tau,\xi)\in\rr^2:|\xi-s|\le t-\tau,\; 0\le \tau\le t\}
\end{equation*}
is the usual one-dimensional backward characteristic cone.
Note that $K(t,s)\subset\IT{\kz,\infty}T$, whenever $(t,s)\in \IT{\kz,\infty}T$.
Since $G\in L^1(\IT{\kz,\infty}T)$, by Lemma~\ref{lem1},
we conclude that $W_2(t,s,0)$ is continuous on $\IT{\kz,\infty}T$, 
and $\supp W_2(\cdot,\cdot,0)\cap \IT{\kz,\infty}T\subset \IT{\kz,1}T$.
\allowdisplaybreaks

It remains to evaluate  the term $V$.  With the aid of \eqref{lwave}, we have
\begin{equation*}
V(t,s,\eps)=V_1(t,s,\eps)+V_2(t,s,\eps),
\end{equation*}
with
\begin{equation*}
V_1(t,s,\eps)\ 
=-\iint\limits_{\omT{\kz,\infty}T}(\rho-1)\tfrac2\eps h'\left((t-\tau)/\eps\right)\partial_\tau\theta(\,\cdot\,;t,s)dxd\tau
\end{equation*}
and
\begin{equation*}
V_2(t,s,\eps)\ =\ \iint\limits_{\omT{\kz,\infty}T}
(\rho-1)
\tfrac1{\eps^2} h''\left((t-\tau)/\eps\right)\theta(\,\cdot\,;t,s)dxd\tau.
\end{equation*}

We fix $t_0$ with $0<t_0<T$, and suppose $0<\eps<t_0$.  Let $(t,s)\in\IT{\kz,\infty}T$ with $t\ge t_0$.
To shorten the formulas, set $\tilde\rho=\rho-1$.
Upon performing the change of variables $\sigma =(t-\tau)/\eps$
and taking into account \eqref{thetadef} and \eqref{hdef},
we can write 
\begin{align*}
V_1(t,s,\eps)\ =\ &\int_0^1\int_{\{x_1>s-\eps\sigma \}}\tilde\rho(t-\eps\sigma ,x)
h'(\sigma )(x_1-s+\eps\sigma )^2  dxd\sigma \\
&+\int_0^1\int_{\{x_1>s+\eps\sigma \}}\tilde\rho(t-\eps\sigma ,x)
h'(\sigma )(x_1-s-\eps\sigma )^2 dxd\sigma.
\end{align*}
This can be re-arranged in the form
\begin{align*}
V_1(t,s,&\eps)\\
\ =\ &
\int_0^1\int_{\{x_1>s\}}\tilde\rho(t-\eps\sigma ,x)h'(\sigma )(x_1-s+\eps\sigma )^2
 dxd\sigma \\
&
+\int_0^1\int_{\{x_1>s\}}\tilde\rho(t-\eps\sigma ,x)
h'(\sigma )(x_1-s-\eps\sigma )^2
 dxd\sigma \\
&+\int_0^1\int_{\{-\eps\sigma <x_1-s<0\}}\tilde\rho(t-\eps\sigma ,x)
h'(\sigma )(x_1-s+\eps\sigma )^2 dxd\sigma \\
&-\int_0^1\int_{\{0<x_1-s<\eps\sigma \}}\tilde\rho(t-\eps\sigma ,x)h'(\sigma )(x_1-s-\eps\sigma )^2
 dxd\sigma.
 \end{align*}
After cancellations, this becomes
\begin{align*}
V_1(t,s,\eps)
 \ =\ & \int_0^1\int_{\{x_1>s\}}\tilde\rho(t-\eps\sigma ,x)2h'(\sigma )(x_1-s )^2
 dxd\sigma\\
 & +\mathcal{O}\left(\eps^2\|\tilde\rho\|_{L^\infty\left(\omT{\kz,1}{T}\right)}\right).
\end{align*}
In the same way, we have
\begin{align*}
V_2(t,s,\eps)
\ =\ & \int_0^1\int_{\{x_1>s\}}\tilde\rho(t-\eps\sigma ,x)\sigma h''(\sigma )(x_1-s )^2
 dxd\sigma\\
 & +\mathcal{O}\left(\eps^2\|\tilde\rho\|_{L^\infty\left(\omT{\kz,1}{T}\right)}\right).
\end{align*}
Thus, we find that 
\begin{multline*}
V(t,s,\eps)\\=\int_0^1\int_{\{x_1>s\}}\tilde\rho(t-\eps\sigma ,x)
[2h'(\sigma )+\sigma  h''(\sigma )](x_1-s)^2 dxd\sigma\\
+\mathcal{O}\left(\eps^2\|\tilde\rho\|_{L^\infty\left(\omT{\kz,1}{T}\right)}\right).
\end{multline*}

We claim that $V(t,s,\eps)\to v(t,s)$ in $L^1\left(\IT{\kz,\infty}T\setminus \IT{\kz,\infty}{t_0}\right)$,
for every $t_0$.
Recall the properties
$v\in L^1\left(\IT{\kz,\infty}{T}\right)$ 
and $\esupp v\cap \IT{\kz,\infty}T\subset\IT{\kz,1}T$ given in Lemma~\ref{lem1}, \eqref{item1}.
Using the properties of $h$ in \eqref{approxid}, we have $\int_0^1[2h'(\sigma )+\sigma  h''(\sigma )]d\sigma =1$.
Thus, we obtain
\begin{align*}
\|&V(\cdot,\eps)-v(\cdot)\|_{L^1\left(\IT{\kz,\infty}T\setminus \IT{\kz,\infty}{t_0}\right)}\\
&=\int_{t_0}^{T}\int_{\kz+t}^{\infty}\big|V(t,s,\eps)-v(t,s)\big|dsdt\\
&=\int_{t_0}^{T}\int_{\kz+t}^{\infty}\bigg|\int_0^1\int_{\{x_1>s\}}
[\tilde\rho(t-\eps\sigma ,x)-\tilde\rho(t,x)]
 \\
&\qquad
\times
[2h'(\sigma )+\sigma  h''(\sigma )](x_1-s)^2
 dxd\sigma \bigg|dsdt+\mathcal O(\eps^2)\\
&\lesssim \int_0^1\int_{t_0}^{T}\int_{\{x_1>\kz+t\}}
|\tilde\rho(t-\eps\sigma ,x)-\tilde\rho(t,x)|
 dxdtd\sigma+\mathcal O(\eps^2).
\end{align*}
Since $0<\eps<t_0$, the shifted times $t-\eps\sigma$ remain in $(0,T)$.
Setting $r=\eps\sigma$, we obtain
\begin{multline*}
\|V(\cdot,\eps)-v(\cdot)\|_{L^1\left(\IT{\kz,\infty}T\setminus \IT{\kz,\infty}{t_0}\right)}\\
\lesssim\sup_{r\le\eps}\int_{t_0}^{T}\int_{\{x_1>\kz+t\}}
|\tilde\rho(t-r ,x)-\tilde\rho(t,x)|dxdt+\mathcal O(\eps^2).
\end{multline*}
Set
\[
q(t,x)=\mathbf 1_{\{x_1>\kz+t\}}\tilde\rho(t,x),
\]
and extend $q$ to be zero outside $\st$.  Then $q\in L^1(\rr\times\rr^3)$, by Lemma~\ref{lem1}.
Since
\[
\mathbf 1_{\{x_1>\kz+t\}}[q(t-r,x)-q(t,x)]
=\mathbf 1_{\{x_1>\kz+t\}}[\tilde\rho(t-r,x)-\tilde\rho(t,x)]
\]
 for $0<r<\eps$, we have that the last integral above is bounded by
\[
\sup_{r<\eps}\|q(\cdot-r,\cdot)-q(\cdot,\cdot)\|_{L^1(\rr\times\rr^3)}.
\]
This tends to zero as $\eps\to0$ by the $L^1$-continuity
of time translations,
which therefore proves the claim.

Combining the preceding computations and passing to the limit $\eps\to0$, 
we arrive at the desired representation formula
\begin{equation}
\label{wkrep}
v(t,s)=v_0(t,s)+\iint_{\kts{}}G(\tau,\xi)d\xi d\tau \isp{a.e.} \IT{\kz,\infty}{T}.
\end{equation}
The right-hand side is continuous on $\IT{\kz,\infty}{T}$, so we can redefine 
$v(t,s)$ on a set of measure zero, if necessary, so that $v(t,s)\in C\left(\IT{\kz,\infty}T\right)$
and equality holds in \eqref{wkrep}
for all $(t,s)\in \IT{\kz,\infty}{T}$.  
Moreover, we can now strengthen the inclusions in Lemma~\ref{lem1}
by replacing $\esupp v$ with $\supp v$:
\begin{equation*}
\supp v\cap\IT{\kz,\infty}T\subset\IT{\kz,1}T,
\quad
 \supp v(t,\cdot)\cap\It{\kz,\infty}t\subset \It{\kz,1}t,\; t\in[0,T).
\end{equation*}
That $v$ has a continuous representative
 is not unexpected, cf.\ Lemma 1.3.3 in \auth{Dafermos} \cite{Dafermos-2016}.

Lemma~\ref{lem1}, \eqref{item1}, gives
$\esupp G\cap\IT{\kz,\infty}T\subset  \IT{\kz,1}T$.
Since $(t,s)\in\IT{\kz,\infty}T$, the backward characteristic cone $K(t,s)$ is
contained in $\IT{\kz,\infty}T$.
Because $G=0$ a.e.\ outside $\IT{\kz,1}T$, the integration in \eqref{wkrep} is effectively
restricted to the truncated backward characteristic cone
\begin{equation*}
\kts{\kz,1}=K(t,s)\cap \IT{\kz,1}T.
\end{equation*}
This set is nonempty only for $(t,s)\in\IT{\kz,1}T$;
see Figure \ref{diagram0}.

\begin{figure}[h]
\caption{The truncated backward characteristic cone
$\kts{\kz,1}=K(t,s)\cap \IT{\kz,1}T$ associated with a point
$(t,s)\in \IT{\kz,1}T$.}
\label{diagram0}

\centering
\vspace{0.25cm}

\begin{tikzpicture}[x=0.92cm,y=0.92cm]
  \def\yT{5.20}
  \def\xLzero{0.70}
  \def\xRzero{4.10}
  \def\xaxisend{10.25}
  \def\yaxisend{6.20}

  \coordinate (L0) at (\xLzero,0);
  \coordinate (LT) at ({\xLzero+\yT},\yT);
  \coordinate (R0) at (\xRzero,0);
  \coordinate (RT) at ({\xRzero+\yT},\yT);

  \coordinate (P) at (4.90,3.20);

\fill (P) circle (1.5pt);

  \coordinate (A) at ({4.90-3.20},0);
  \coordinate (B) at ({4.90+3.20},0);

  \coordinate (Q) at
    ({(4.90+3.20+\xRzero)/2},
     {(4.90+3.20-\xRzero)/2});

  \draw
    (L0) -- (LT)
    node[
      pos=.69,
      sloped,
      above=4pt
    ]
    {\small{$s-t=\kz$}};

  \draw
    (R0) -- (RT)
    node[
      pos=.77,
      sloped,
      below=4pt
    ]
    {\small{$s-t=1$}};

  \draw
    (LT) -- (RT)
    node[
      pos=.56,
      above=4pt
    ]
    {\small{$t=T$}};

  \draw
    (P) -- (B);

  \draw[line width=1.15pt]
    (A) -- (P) -- (Q) -- (R0);

  \draw[line width=1.15pt]
    (A) -- (R0);

    \node[
  anchor=north west,
  xshift=-12pt,
  yshift=18pt
] at (P)
  {\small{$(t,s)$}};


\node[rotate=47] at (4.40,1.45)
  {$\kts{\kz,1}$};

  \node at (6.68,3.88)
    {$\IT{\kz,1}T$};

  \node at (7.95,2.02)
    {$v=0$};

  \node at (5.93,0.82)
    {$v=0$};

  \draw[->,line width=.85pt]
    (0,0) -- (0,\yaxisend)
    node[above right=-1pt] {$t$};

  \draw[->,line width=.85pt]
    (0,0) -- (\xaxisend,0)
    node[above left=-1pt] {$s$};
\end{tikzpicture}
\end{figure}


\subsection*{Integral inequality for $v$}
The next step will be to use \eqref{wkrep} to obtain a nonlinear inhomogeneous integral inequality involving $v$.

By definition, if $(t,s)\in\IT{\kz,\infty}T$, then $\{x\in\rr^3:x_1\ge s\}\subset \omt{\kz,\infty}t$.
Lemma~\ref{lem1}, \eqref{item2}, then tells us that
\begin{align*}
\{x\in\rr^3:x_1\ge s\}\cap &\esupp(\rho(t,\cdot)-1,u(t,\cdot))\\
& \ \subset\ 
\{x\in\rr^3:x_1\ge s\}\cap \omt{\kz,1}t\\
& \ =\ \{x\in\rr^3:x_1\ge s,\; |x|\le 1+t\}\\
& \ =\ \omt{s-t,1}t,
\end{align*}
a.e.\ $0<t<T$.
Hence, the domains of integration defining $v(t,s)$ and
$G(t,s)$ may both be reduced to $\omt{s-t,1}t$, for almost
all $(t,s)\in\IT{\kz,1}T$.
The admittedly opaque notation $\omt{s-t,1}t$ will be used only in this section. 

We  introduce the important quantity
\begin{equation}
\label{mass}
M(t,s)
=
\int\limits_{\omt{s-t,1}t}(x_1-s)^2dx,\quad (t,s)\in \IT{\kz,1}T,
\end{equation}
which encodes the three-dimensional dependence in the problem.
This function will be computed explicitly below.  For now,
note that  whenever $M(t,s)>0$
\begin{equation*}
d\mu=\frac{(x_1-s)^2}{M(t,s)}dx
\end{equation*}
is a probability measure on $\omt{s-t,1}t$.
This normalization will allow us to exploit the convexity of $\Phi$.

Given any $(t,s)\in  \IT{\kz,1}T$ 
and $x\in \omt{s-t,1}t$, 
it follows that 
\begin{equation*}
0\le x_1-s\le |x|-s\le 1 -s+t\le 1 -\kz= \bk.
\end{equation*}
 Together with the nonnegativity of $\Phi$, 
this yields that, a.e.\ $\IT{\kz,1}T$,
\begin{equation*}
\begin{aligned}
G(t,s)\ =\ & \int\limits_{\omt{s-t,1}t}[\rho(e_1\cdot u)^2+\Phi(\rho(t,x)-1)]dx\\
\ \ge\ & \int\limits_{\omt{s-t,1}t}\Phi(\rho(t,x)-1)dx\\
\ \ge\ & \bk^{-2} \int\limits_{\omt{s-t,1}t}\Phi(\rho(t,x)-1)(x_1-s)^2dx\\
\ =\ &\bk^{-2}M(t,s) \int\limits_{\omt{s-t,1}t}\Phi(\rho(t,x)-1)d\mu. 
\end{aligned}
\end{equation*}
As noted in \eqref{Phi1}, $\Phi(\eta)$ is convex on the interval $(-1,\infty)$.
Since $\rho(t,x)-1>-1$ a.e.\ and both $\rho-1$ and $\Phi(\rho-1)$ belong to $L^1(\mu)$, 
Jensen's inequality provides the lower bound
\begin{equation}
\label{nonlb}
\begin{aligned}
G(t,s)\ \ge\ & \bk^{-2}M(t,s)\Phi\left(\int_{\omt{s-t,1}t}(\rho(t,x)-1)d\mu 
\right)\\
\ =\ &\bk^{-2}M(t,s)\Phi\left(\frac{v(t,s)}{M(t,s)}\right),
\end{aligned}
\end{equation}
for almost all $(t,s)\in  \IT{\kz,1}T$.

Combining \eqref{wkrep} and  \eqref{nonlb}, we arrive at
the inequality
\begin{equation}
\label{rep}
v(t,s)\ge
v_0(t,s)
+\iint\limits_{\kts{\kz,1}}\bk^{-2}M(\tau,\xi)\Phi\left(\frac{v(\tau,\xi)}{M(\tau,\xi)}\right)d\xi d\tau,
\end{equation}
for all $(t,s)\in\IT{\kz,1}T$, since \eqref{wkrep} holds
everywhere and \eqref{nonlb} holds almost everywhere on the
integration region.

We have therefore reduced the problem to the scalar nonlinear  inhomogeneous 
integral inequality \eqref{rep}.
The remainder of the proof is devoted to showing that if $v(t,s)$ is a continuous
function on $\IT{\kz,1}T$ satisfying \eqref{rep}, then $T$ has the upper bound \eqref{concl} claimed in the theorem.

\subsection*{Formulation of the iterative problem}
In order to simplify the estimation of $v_0(t,s)$, it is helpful 
to have $t+s\ge1 $, so that \eqref{wzsimplification} holds.  
At the same time, we would like to ensure that $s-t$ is bounded uniformly away from 
one  so that  both $v_0(t,s)$ and $M(t,s)$ remain uniformly bounded away from zero.  
Accordingly, we choose a number $k$ such that $\kz<k<1 $
and further restrict ourselves to the  subregion
\begin{equation*}
\JT{\kz,k}T=\IT{\kz,k}T\cap \{(t,s):s+t\ge1\}\subset\IT{\kz,1}T.
\end{equation*}
Henceforth the iterative argument will be carried out on $\JT{\kz,k}T$.
Since the integrand in the representation formula \eqref{rep}
is nonnegative, only the portion of the backward characteristic cone lying inside $\JT{\kz,k}T$   will be used.
The region   $\JT{\kz,k}T$ is nonempty for any value of the parameter $\kz<k<1$ thanks to our assumption that
\[
T\ge\bk/2=(1 -\kz)/2>(1-k)/2.
\]
 This leads us to  define 
\begin{equation*}
\lts{\kz,k}=
K(t,s)\cap \JT{\kz,k}T\subset\kts{\kz,k}.
\end{equation*}
See Figure \ref{diagram}.

 For the time being, we shall regard $k$ as being fixed. 
However, its dependence will always be indicated explicitly so that we may later optimize over $k$.

\begin{figure}[h]
\caption{The iteration region $\JT{\kz,k}T$, depicted with $T>\bk/2$,
and at a representative point $(t,s)\in\JT{\kz,k}T$, the truncated
backward characteristic cone
$\lts{\kz,k}=K(t,s)\cap\JT{\kz,k}T$.}
\label{diagram}

\centering
\vspace{0.25cm}

\begin{tikzpicture}[x=0.92cm,y=0.92cm]
  \def\yT{5.20}
  \def\xKzero{0.35}   
  \def\xK{3.20}
  \def\xOne{4.10}     
  \def\xSum{4.10}     
  \def\xaxisend{10.25}
  \def\yaxisend{6.20}

  \coordinate (L0) at (\xKzero,0);
  \coordinate (LT) at ({\xKzero+\yT},\yT);

  \coordinate (K0) at (\xK,0);
  \coordinate (KT) at ({\xK+\yT},\yT);

  \coordinate (O0) at (\xOne,0);
  \coordinate (OT) at ({\xOne+\yT},\yT);

  \coordinate (JL) at
    ({(\xSum+\xKzero)/2},{(\xSum-\xKzero)/2});

  \coordinate (JR) at
    ({(\xSum+\xK)/2},{(\xSum-\xK)/2});

  \coordinate (S0) at (\xSum,0);

  \coordinate (P) at (5.00,3.20);
  
\fill (P) circle (1.5pt);

  \pgfmathsetmacro{\pminus}{5.00-3.20}
  \pgfmathsetmacro{\pplus}{5.00+3.20}

  \coordinate (A) at
    ({(\xSum+\pminus)/2},{(\xSum-\pminus)/2});

  \coordinate (Q) at
    ({(\pplus+\xK)/2},{(\pplus-\xK)/2});

  \draw
    (L0) -- (LT)
    node[
      pos=.72,
      sloped,
      above=0pt
    ]
    {\small{$s-t=\kz$}};

  \draw
    (K0) -- (KT)
    node[
      pos=.76,
      sloped,
      below=-1pt
    ]
    {\small{$s-t=k$}};

  \draw[gray]
    (O0) -- (OT)
    node[
      pos=.76,
      sloped,
      below=0pt,
      text=black
    ]
    {\small{$s-t=1$}};

  \draw
    (LT) -- (OT)
    node[
      pos=.52,
      above=4pt
    ]
    {\small{$t=T$}};

  \draw
    (S0) -- (JL)
    node[
      pos=.6,
      sloped,
      below=0pt
    ]
    {\small{$s+t=1$}};

  \draw[densely dashed]
    (0,{(\xSum-\xKzero)/2}) -- (JL);

  \node[above right=1pt]
    at (0,{(\xSum-\xKzero)/2})
    {\small{$t=\bk/2$}};

  \draw[line width=1.15pt]
    (A) -- (P) -- (Q) -- (JR) -- cycle;

    
    \node[
  font=\small,
  anchor=south west,
  xshift=-15pt,
  yshift=-1pt
] at (P)
  {$(t,s)$};

  \node[rotate=49]
    at (4.30,1.75)
    {$\lts{\kz,k}$};

  \node
    at (6.50,4.10)
    {$\JT{\kz,k}T$};

  \node
    at (7.95,1.82)
    {$v=0$};

  \draw[->,line width=.85pt]
    (0,0) -- (0,\yaxisend)
    node[above right=-1pt] {$t$};

  \draw[->,line width=.85pt]
    (0,0) -- (\xaxisend,0)
    node[above left=-1pt] {$s$};
\end{tikzpicture}
\end{figure}

Combining \eqref{wzsimplification} and \eqref{wkdatacond} gives
the lower bound\footnote{If $w_0$ 
were merely nonnegative for $\kz\le k<1$, then we could use $\underline{w}_0(k)=\min\{w_0(s):\kz\le s\le k\}$
here instead of $w_0(k)$.}
\begin{equation}
\label{remark}
v_0(t,s)=w_0(s-t)\ge w_0(k)>0,\quad (t,s)\in\JT{\kz,k}T.
\end{equation}

By \eqref{remark}, inequality \eqref{rep} becomes
\begin{equation}
\label{rep2}
v(t,s)
\ge
w_0(k)
+\iint\limits_{\lts{\kz,k}}\bk^{-2}M(\tau,\xi)\Phi\left(\frac{v(\tau,\xi)}{M(\tau,\xi)}\right)d\xi d\tau,
\end{equation}
provided $(t,s)\in\JT{\kz,k}T$.
Since $\Phi\ge0$, the integral term in
\eqref{rep2} is nonnegative, and therefore we have  established the positivity of $v$:
\begin{equation*}
v(t,s)\ge w_0(k)>0,\quad (t,s)\in\JT{\kz,k}T.
\end{equation*}
This uniform lower bound is the starting point for the iteration. 
In particular, every subsequent lower estimate for $v$ will dominate the positive constant
$w_0(k)$.

Since $\Phi$ is increasing on $[0,\infty)$, \eqref{rep2} immediately yields the following more general estimate.
 Whenever $\uv(t,s)$ satisfies
\begin{equation}
\label{genlb}
v(t,s)\ge \uv(t,s)\ge w_0(k)>0,\quad (t,s)\in \JT{\kz,k}T,
\end{equation}
then
\begin{equation}
\label{rep3}
v(t,s)-w_0(k)
\ge 
\iint\limits_{\lts{\kz,k}}\bk^{-2}
M(\tau,\xi )\Phi\left(\frac{\uv(\tau,\xi )}{M(\tau,\xi )}\right)d\xi d\tau,
\end{equation}
on $\JT{\kz,k}T$.

Before carrying out the iteration, we first obtain quantitative  bounds for
the function $\Phi$ and 
 the quantity $M(t,s)$.  
 These estimates will be used to control the nonlinear term in \eqref{rep2}.

By the definition \eqref{Phi1}, we know that
 $\Phi(0)=\Phi'(0)=0$, and so Taylor's theorem implies that when $\eta>0$
\begin{equation*}
\Phi(\eta)=\half\eta^2\Phi''(\eta^\ast)=\tfrac{\gamma-1}2\eta^2(1+\eta^\ast)^{\gamma-2},
\end{equation*}
for some $0\le\eta^\ast\le\eta$.
Consequently, if $\eta\ge 0$, then
\begin{equation}
\label{Phi2}
\Phi(\eta)\ge \eta^2\Psi(\eta)\isp{with}
\Psi(\eta)=
\begin{cases}
\frac{\gamma-1}2(1+\eta)^{\gamma-2},& 1<\gamma\le2,\\
\frac{\gamma-1}2, & 2<\gamma.
\end{cases}
\end{equation}

Identifying $x\in\rr^3$ with $(y,z)\in \rr\times\rr^2$, 
we may  write
\begin{equation*}
\omt{s-t,1}t= \{(y,z)\in\rr\times\rr^2: (y^2+|z|^2)^{1/2}\le 1 +t,\; y\ge s\}.
\end{equation*}
Then by the definition \eqref{mass} 
\begin{equation*}
M(t,s)=\int_s^{1+t}\int_{(y^2+|z|^2)^{1/2}\le 1+t}(y-s)^2dzdy.
\end{equation*}
Evaluating the integral explicitly gives
\begin{equation*}
M(t,s)=
\tfrac\pi{30} (4(1 +t)+s)(1 -s+t)^4,
\end{equation*}
for all $(t,s)\in \IT{\kz,1}T$.  
This is the only place in the proof where the dimension is used.

\begin{subequations}

Hence, on $\IT{\kz,k}T$, we find that 
\begin{equation}
\label{massest}
M(t,s)
\le
 \tfrac{4\pi}{30} (1 +s+ t)(1 -s+t)^4
\le
 \bk^4(1 +s+ t),
\end{equation}
and
\begin{equation}
\label{MLB}
M(t,s)
\ge \tfrac{4\pi}{30} (1 -s+t)^4
\ \ge\
\tfrac1{3}(1-k)^4\equiv \um.
\end{equation}

\end{subequations}

Returning to \eqref{rep3}, the lower bound \eqref{Phi2} yields 
\begin{equation}
\label{rep3.1}
v(t,s)-w_0(k)
\ge 
\bk^{-2}\iint\limits_{\lts{\kz,k}}
\frac{\uv(\tau,\xi )^2}{M(\tau,\xi )}\Psi\left(\frac{\uv(\tau,\xi )}{M(\tau,\xi )}\right)d\xi d\tau.
\end{equation}
Employing   \eqref{massest} in \eqref{rep3.1} gives
\begin{equation}
\label{rep4}
v(t,s)-w_0(k)
\ge \bk^{-6}
\iint\limits_{\lts{\kz,k}}
\frac{\uv(\tau,\xi )^2}{1+\xi+\tau}\Psi\left(\frac{\uv(\tau,\xi )}{ M(\tau,\xi )}\right)d\xi d\tau,
\end{equation}
for all $(t,s)\in\JT{\kz,k}T$.

\subsection*{Iterative sequence of lower bounds}

We now use \eqref{rep4} to construct an iterative sequence
$\{\uv_n(t,s)\}$ of lower bounds for $v(t,s)$ in $\JT{\kz,k}T$,
starting from the constant lower bound
$\uv_0(t,s)=w_0(k)$.
The argument depends on the monotonicity properties of $\Psi$, and therefore the cases 
$1<\gamma\le2$ and $\gamma>2$ will be treated separately.

First, consider the case  $1<\gamma\le2$.  
Suppose that  $\uv$ satisfies \eqref{genlb}.  Then using \eqref{MLB}, we have 
\begin{equation*}
1+\frac{\uv(t,s)}{M(t,s)}\le \frac{\uv(t,s)}{w_0(k)}+\frac{\uv(t,s)}{\um}
=\left(\frac{1+w_0(k)/\um}{w_0(k)}\right)\uv(t,s),
\end{equation*}
so that \eqref{Phi2} implies 
\begin{equation*}
\begin{aligned}
\Psi\left(\frac{\uv(t,s)}{M(t,s)}\right)
\ =\ &
\frac{\gamma-1}2  \left(1+\frac{\uv(t,s)}{M(t,s)} \right)^{\gamma-2}\\
\ \ge\ &
\frac{\gamma-1}2 \left(\frac{w_0(k)}{1+w_0(k)/\um}\right)^{2-\gamma}\uv(t,s)^{\gamma-2}.
\end{aligned}
\end{equation*}

With this lower bound, \eqref{rep4} yields
\begin{equation}
\label{rep6}
v(t,s)-w_0(k)
\ge 
2\Lambda(k) \iint\limits_{\lts{\kz,k}}
\frac{\uv(\tau,\xi )^\gamma}{1+\xi+\tau}d\xi d\tau,
\end{equation}
where
\begin{equation*}
\Lambda(k)=\frac{\gamma-1}{4\bk^6} \left(\frac{w_0(k)}{1+w_0(k)/\um}\right)^{2-\gamma}.
\end{equation*}

Let us  start with $\uv(t,s)=w_0(k)$.  
Changing variables to $a=\xi+\tau$ and $b=\xi-\tau$ in
\eqref{rep6} yields
\begin{equation}
\label{veezero}
\begin{aligned}
v(t,s)-w_0(k)
\ \ge \ &
2\Lambda(k) w_0(k)^\gamma
\iint\limits_{\lts{\kz,k}}
\frac{1}{1+\xi+\tau}d\xi d\tau\\
\ =\ &
2\Lambda(k) w_0(k)^\gamma
\int_{1}^{s+t}\int_{s-t}^k\frac1{1+a}\;\half dbda\\
\ =\ & \Lambda(k) w_0(k)^\gamma
 (k-s+t)\ln\frac{1+s+t}{2}.
\end{aligned}
\end{equation}

Motivated by \eqref{veezero}, suppose that
 we have a lower bound of the form
\begin{equation*}
v(t,s)\ge \uv(t,s)= w_0(k)+w(t,s), \quad (t,s)\in \JT{\kz,k}T,
\end{equation*}
with
\begin{equation*}
w(t,s)=C(k-s+t)^q\left(\ln\frac{1+s+t}{2}\right)^q,\quad C>0,\;q\ge0.
\end{equation*}

Again by \eqref{rep6},
since $\uv> w$, we obtain
\begin{equation}
\label{genlb1}
\begin{aligned}
v(t,s)& -w_0(k)\\
\ \ge \ &
2\Lambda(k)\iint\limits_{\lts{\kz,k}}
\frac{w(\tau,\xi)^\gamma}
{1+\xi+\tau}d\xi d\tau\\
\ = \ &
2\Lambda(k)\iint\limits_{\lts{\kz,k}}
\frac{C^\gamma(k-\xi+\tau)^{\gamma q}\left(\ln\frac{1+\xi+\tau}{2}\right)^{\gamma q}}
{1+\xi+\tau}d\xi d\tau\\
\ =\ &
2\Lambda(k) C^\gamma
\int_1^{s+t}\int_{s-t}^k\frac{(k-b)^{\gamma q}\left(\ln\frac{1+a}{2}\right)^{\gamma q}}{1+a}\;\half dbda\\
\ =\ & \Lambda(k) C^\gamma \frac1{(\gamma q+1)^2} 
(k-s+t)^{\gamma q+1}\left(\ln\frac{1+s+t}{2}\right)^{\gamma q+1}.
\end{aligned}
\end{equation}

This sets up an iterative sequence of lower bounds
\begin{subequations}
\begin{equation}
\label{seq1}
v(t,s)
\ge\uv_n(t,s)= w_0(k)+w_n(t,s),\quad n=1,2,\ldots
\end{equation}
\begin{equation}
\label{seq2}
w_n(t,s)=C_n(k-s+t)^{q_n}\left(\ln\frac{1+s+t}{2}\right)^{q_n},
\end{equation}
\end{subequations}
in which by \eqref{veezero}, \eqref{genlb1}
the sequences are determined recursively
\begin{align*}
&q_1=1,&& q_{n}=\gamma q_{n-1}+1,&& n=2,3,\ldots\\
 & C_1=\Lambda(k) w_0(k)^\gamma,&& C_{n}=\Lambda(k) C_{n-1}^\gamma(q_n)^{-2},
&& n=2,3,\ldots.
\end{align*}
The recursions yield
\begin{subequations}
\begin{align}
\label{queen}
&q_n=\frac{\gamma^n-1}{\gamma-1}, 
&& n=1,2,\ldots\\
\label{been}
&B_1=1,\quad B_n=B_{n-1}(q_n^2)^{\gamma^{-n}},&& n=2,3,\ldots,\\
\label{ceen}
&C_n=\Lambda(k)^{q_n}
\left(\frac{w_0(k)}{B_n}\right)^{\gamma^n},
&& n=1,2,\ldots.
\end{align}
\end{subequations}
We shall now show that
for any $1<\gamma\le2$, the sequence $\{B_n\}$ defined in \eqref{queen}, \eqref{been}
 is increasing and bounded above, implying that it converges to a finite limit $B_\infty(\gamma)$.

By induction, the terms of the sequence $\{B_n\}$ are given by the product
\begin{equation*}
B_n=\prod_{j=2}^n(q_j^2)^{\gamma^{-j}},\quad n=2,3,\ldots.
\end{equation*}

Since $\gamma>1$, we have $q_n>1$, 
 and so $(q_n^2)^{\gamma^{-n}}>1$, for $n=2,3,\ldots$.
 It follows that 
$\{B_n\}$ is strictly increasing.

For $1<\gamma\le 2$, we have
\[
q_n=\frac{\gamma^{n}-1}{\gamma-1}\le \frac{\gamma^{n}}{(\gamma-1)^n},
\]
and thus,
\[
B_n
\le\prod_{j=2}^n\left(\frac{\gamma}{\gamma-1}\right)^{2j\gamma^{-j}}
=\left(\frac{\gamma}{\gamma-1}\right)^{\sum_{j=2}^n2j\gamma^{-j}}
\le \left(\frac{\gamma}{\gamma-1}\right)^{\sum_{j=2}^\infty 2j\gamma^{-j}}.
\]
The infinite sum is convergent, so  $\{B_n\}$ is bounded above.

Thus, having shown that $\{B_n\}$ converges,   we find from \eqref{queen}, \eqref{ceen} that
\begin{equation*}
C_n^{1/q_n}
=\Lambda(k)\left(\frac{w_0(k)}{ B_n}\right)^{\gamma^{n}/q_n}
 \to \Lambda(k) \left(\frac{w_0(k)}{ B_\infty(\gamma)}\right)^{\gamma-1},
\end{equation*}
as $n\to\infty$.
On the other hand, since
$v(t,s)\ge w_0(k)>0$, 
we have $v(t,s)^{1/q_n}\to1$, as $n\to\infty$.
By \eqref{seq1}, \eqref{seq2}, we obtain
\begin{align*}
1
\ =\ &
\lim_n v(t,s)^{1/q_n}\\
\ \ge\ & \lim_n w_n(t,s)^{1/q_n}\\
\ =\ &\Lambda(k)\left(\frac{w_0(k)}{ B_\infty(\gamma)}\right)^{\gamma-1}
(k-s+t)\ln\frac{1+s+t}{2},
\end{align*}
for all $(t,s)\in \JT{\kz,k}T$.  In particular, if we let $(t,s)\to(T,\kz+T)$  from within $\JT{\kz,k}T$,
 this yields the upper bound
\begin{equation}
\label{lifespanub}
\begin{aligned}
\frac{1+\kz}{2}+T
\ \le\ & 
\exp \left[  \left(\frac{1}{(k-\kz)\Lambda(k)}\right) 
\left(\frac{ B_\infty(\gamma)}{w_0(k)}\right)^{\gamma-1} \right]\\
\ =\ & 
 \exp\bigg[\left(\frac{4B_\infty(\gamma)^{\gamma-1}}{\gamma-1}\right)
 \left( \frac{\bk^6}  {(k-\kz)w_0(k)}\right)\\
&\qquad\qquad\qquad\times  \left(1+\frac{ 3w_0(k)}{(1-k)^4}\right)^{2-\gamma}\bigg],
\end{aligned}
\end{equation}
for all $\kz<k<1$.

Consider now the case when $\gamma>2$. 
For any $\uv$ satisfying \eqref{genlb}, 
 we can combine \eqref{Phi2} and \eqref{rep4} to obtain
\begin{equation*}
v(t,s)-w_0(k)
\ge 
\frac{\gamma-1}{2\bk^6}\iint_{\lts{\kz,k}}
\frac{\uv(\tau,\xi )^2}{1+\xi+\tau}d\xi d\tau.
\end{equation*}
Observe that this is a special case of \eqref{rep6} if we take $\gamma=2$ and then
replace  $2\Lambda(k)$  by $(\gamma-1)/(2\bk^6)$.
Thus, from \eqref{lifespanub}, we obtain the upper bound
\begin{equation}
\label{lifespanub2}
\frac{1+\kz}{2}+T
\le\exp\left[\left(\frac{4B_\infty(2)}{\gamma-1}\right)\left( \frac{\bk^6  }{(k-\kz)w_0(k)}\right)\right].
\end{equation}
(Note that \eqref{lifespanub}, \eqref{lifespanub2} are consistent with $T\ge\bk/2=(1-\kz)/2$.)

If we now take
\begin{equation*}
\Gamma(\gamma)=\frac{4}{\gamma-1}
\begin{cases}
B_\infty(\gamma)^{\gamma-1},&1<\gamma\le2\\
B_\infty(2),&2<\gamma,
\end{cases}
\end{equation*}
then \eqref{lifespanub}, \eqref{lifespanub2} show
that 
\[
\frac{1+\kz}2+T\le T(\kz,k),\quad \kz<  k<1.
\]
Taking the infimum over $k$ gives
\[
T\le T_0(\kz),
\]
as claimed.
\qed

\section{Proof of Theorem~\ref{thm2}}

The idea is to derive an energy inequality in moving domains $\vv_\alpha$.
 We begin by combining the mass conservation law with the entropy inequality 
 to obtain a differential inequality for a nonnegative perturbed
 energy density 
 $\mathcal X$ and flux $\mathcal Y$. After estimating $\mathcal Y$
  by $\mathcal X$, we test against a distance function adapted to $\vv_\alpha$
    and derive a propagation criterion for the quantity $F(\alpha;\vv)$.

The boundedness and monotonicity of $F(\cdot;\vv)$ are immediate from
\eqref{ellinfbound} and from the monotonicity of the sets $\vv_\alpha$ in
$\alpha$. We record the proof of left-continuity.
Let 
\[
r(x)=\operatorname{dist}(x,\rr^3\setminus\vv)
\]
 and let
$0<\alpha_n\uparrow\alpha\le1$. Since $\vv$ is closed with Lipschitz
boundary,
\[
\vv_\alpha=\{(t,x)\in S_T:x\in\vv,\ t\le \alpha r(x)\}.
\]
Moreover,
\[
\bigcup_n \vv_{\alpha_n}
=
\vv_\alpha\setminus
\{(t,x)\in S_T:t=\alpha r(x)>0\}.
\]
 The exceptional set is
contained in the graph $t=\alpha r(x)$ and therefore has 
 measure zero in $\rr^4$. Hence, we have
\[
F(\alpha_n;\vv)=\|\rho^{-1}(\rho-1)\|_{L^\infty(\vv_{\alpha_n})}+\|u\|_{L^\infty(\vv_{\alpha_n})}
\uparrow
F(\alpha;\vv).
\]

\subsection*{Relative energy inequality}

Suppose that $\varphi:\st\to\rr_{\ge0}$
is a nonnegative compactly supported  Lipschitz function
with  $\supp\varphi(0,\cdot)\subset\vv$.
Then 
from  \eqref{weakformulation1} and the assumption that $F(0;\vv)=0$,
 we get
\begin{equation*}
\iint_{S_T}[(\rho-1)\partial_t\varphi+\rho u\cdot \nabla_x\varphi]dxdt
=-\int_{\vv}(\rho_0-1)\varphi(0,\cdot)dx
=0.
\end{equation*}
Since the solution is entropy-admissible on $S_T $ and $F(0;\vv)=0$,  
 we have from \eqref{admissible}
\begin{multline*}
\iint_{S_T}\left[\left(\half\rho|u|^2+\tfrac{1}{\gamma(\gamma-1)}\left(\rho^{\gamma}-1
\right)\right)\right.\partial_t\varphi \\
+\left.\left(\half\rho|u|^2+\tfrac{1}{\gamma-1}\rho^{\gamma}\right)u\cdot\nabla_x\varphi\right]dxdt
\ge 0.
\end{multline*}

Combining these relations yields
\begin{equation}
\label{wkentropy}
\iint_{S_T}(\mathcal X(\rho,u)\partial_t\varphi+\mathcal Y(\rho,u)\cdot\nabla_x\varphi ) dxdt\ge0,
\end{equation}
in which
\begin{equation*}
\begin{aligned}
\mathcal X(\rho,u)
\ =\ &\half\rho|u|^2+\tfrac{1}{\gamma-1}\Phi(\rho-1),\\
\mathcal Y(\rho,u)\ =\ & \left(\half\rho|u|^2
+\tfrac{1}{\gamma-1}\left(\rho^{\gamma}-\rho\right)
\right)u.
\end{aligned}
\end{equation*}

Next we estimate $\mathcal Y(\rho,u)$, temporarily treating $(\rho,u)$
as independent variables.  Using \eqref{Phi1}, we can write
\begin{equation*}
\mathcal Y(\rho,u)=(\rho-1)u+\left(\half\rho|u|^2+\tfrac{\gamma}{\gamma-1}\Phi(\rho-1)\right)u,
\end{equation*}
so that
\begin{align*}
|\mathcal Y(\rho,u)|
\ \le\ & 
 \half\rho|u|^2+\half\rho^{-1}(\rho-1)^2+\gamma\mathcal X(\rho,u)|u|\\
\ =\ &
\mathcal X(\rho,u)+\mathcal Z(\rho-1)+\gamma\mathcal X(\rho,u)|u|,
\end{align*}
with
\begin{equation*}
\mathcal Z(\rho-1)= \half\rho^{-1}(\rho-1)^2-\tfrac1{\gamma-1}\Phi(\rho-1).
\end{equation*}

Regarded as a function of $\eta=\rho-1$, $\mathcal Z$ satisfies
$\mathcal Z(0)=\mathcal Z'(0)=\mathcal Z''(0)=0$.
Moreover, $\Phi(0)=\Phi'(0)=0$ and
$\Phi''(\eta)>0$ for $\eta>-1$.
To compare $\mathcal Z$ with $\Phi$, we define
the function\footnote{When $\gamma=2$, we have $Q(\rho)=-1$.}
\begin{equation*}
Q(\rho)=
\begin{cases}
\displaystyle
\frac{\rho\mathcal Z(\rho-1)}{(\rho-1)\Phi(\rho-1)},&\rho>0,\;\rho\ne1\\
\mathcal Z'''(0)/(3\Phi''(0)),&\rho=1.
\end{cases}
\end{equation*}

The assigned value makes $Q$ continuous on $\rr_{>0}$.
Moreover, $|Q(\rho)|$ has finite limits as $\rho\to0$ and
$\rho\to\infty$.
Thus,
there is a constant $C(\gamma)>0$ such that
\begin{equation*}
|Q(\rho)|\le C(\gamma),\quad \rho\in\rr_{>0}.
\end{equation*}
It follows that $|\mathcal Z(\rho-1)|\le C(\gamma)\rho^{-1}|\rho-1|\Phi(\rho-1)$, and so
\begin{equation}
\label{YYest}
|\mathcal Y(\rho,u)| \le
\mathcal X(\rho,u)+C(\gamma)(\rho^{-1}|\rho-1|+|u|)\mathcal X(\rho,u),
\end{equation}
for a possibly larger constant $C(\gamma)$.

From  \eqref{YYest}, we deduce
\begin{multline*}
\iint_{S_T}|\mathcal Y(\rho,u)\cdot\nabla\varphi|dxdt
\le \iint_{S_T}\mathcal X(\rho,u)|\nabla\varphi|dxdt\\
+ C(\gamma)\iint_{S_T}(\rho^{-1}|\rho-1|+|u|)\mathcal X(\rho,u)|\nabla\varphi|dxdt.
\end{multline*}
Combining this estimate with \eqref{wkentropy} yields the relative energy inequality
\begin{multline}
\label{enineq1}
-\iint\limits_{S_T}\mathcal X(\rho,u)(\partial_t\varphi+|\nabla_x\varphi|)dxdt\\
\le C(\gamma)\iint\limits_{S_T}(\rho^{-1}|\rho-1|+|u|)\mathcal X(\rho,u)|\nabla\varphi|dxdt.
\end{multline}

\subsection*{Estimate on moving domains}

We now construct a family of test functions to use in \eqref{enineq1}.
Let $\vv$, $d_\alpha$,  and $\vv_\alpha$ be defined as in the statement of the theorem.
By construction, the function $d_\alpha$ is Lipschitz continuous and satisfies the eikonal equation
\begin{equation}
\label{eikonal}
\partial_td_\alpha+|\nabla_x d_\alpha|/\alpha=0,\isp{a.e.} \rr_{\ge0}\times\rr^3.
\end{equation}
To obtain a compactly supported test function, we introduce the time cut-off  $h$ from \eqref{hdef}
and define
\begin{equation*}
\varphi_{\alpha}(t,x)=h\left(\frac{\tau-t}\eps\right)d_\alpha(t,x),\quad 0<\tau<T.
\end{equation*}
Since $\vv$ is bounded,
this is a nonnegative compactly supported  Lipschitz test function on $[0,T)\times\rr^3$ with
\begin{equation*}
\supp\varphi_{\alpha}
\subset\vv_\alpha\cap \overline{S_\tau}.
\end{equation*}

Since $h$ is nonnegative and nondecreasing, and since \eqref{eikonal} holds, we have
\[
|\nabla_x\varphi_\alpha|
=
h\left(\frac{\tau-t}{\eps}\right)
\]
and
\[
-\big(\partial_t\varphi_\alpha+|\nabla\varphi_\alpha|\big)
\ge
h\left(\frac{\tau-t}{\eps}\right)\left(\frac1\alpha-1\right),
\]
a.e. on $\supp\varphi_\alpha$.

Substituting the test function $\varphi_{\alpha}$ in \eqref{enineq1}, we obtain 
\begin{equation*}
\begin{aligned}
\iint\limits_{\supp\varphi_\alpha}&
h\left(\frac{\tau-t}\eps\right)\left(\frac1\alpha-1\right)\mathcal X(\rho,u)dxdt\\
\ \le\ & 
C(\gamma)(\|\rho^{-1}(\rho-1)\|_{L^\infty(\vv_\alpha)}
+\|u\|_{L^\infty(\vv_\alpha)})\\
&\times
\iint\limits_{\supp\varphi_\alpha}
h\left(\frac{\tau-t}\eps\right)\mathcal X(\rho,u)dxdt\\
\ =\ & C(\gamma)F(\alpha;\vv)
\iint\limits_{\supp\varphi_\alpha}
h\left(\frac{\tau-t}\eps\right)\mathcal X(\rho,u)dxdt.
\end{aligned}
\end{equation*}
Letting $\eps\to0$ and then $\tau\to T$   gives
\begin{equation}
\label{enineq2}
\left(\frac1\alpha-1\right)\iint_{\vv_\alpha}\mathcal X(\rho,u)dxdt
\le 
C(\gamma)F(\alpha;\vv)\iint_{\vv_\alpha}\mathcal X(\rho,u)dxdt.
\end{equation}
Define the function
\begin{equation*}
g(\alpha;\vv)=\iint_{\vv_\alpha}
\mathcal X(\rho,u)dxdt,\quad 0<\alpha\le1.
\end{equation*}
Note that $g$ is bounded and  nondecreasing.
For $t>0$, the set $\vv_\alpha$ is given by
\[
\vv_\alpha\cap((0,T)\times\rr^3)
=
\{(t,x)\in (0,T)\times\rr^3:
t\le \alpha\,\dist(x,\rr^3\setminus\vv)\}.
\]
Thus, the characteristic functions of $\vv_\alpha$ vary pointwise a.e.\
continuously with respect to $\alpha$. By dominated convergence, $g$ is
continuous.
Moreover, since $\mathcal X(\rho,u)=0$ if and only if $(\rho,u)=(1,0)$,
we have $g(\alpha;\vv)=0$ if and only if $F(\alpha;\vv)=0$.

\subsection*{The critical parameter $\alpha(V)$}

We now identify the largest value of $\alpha$ for which the disturbance must vanish.
Equation \eqref{enineq2} tells us that 
\begin{equation}
\label{point}
g(\alpha;\vv)\le\frac{C(\gamma)\alpha}{1-\alpha}
 F(\alpha;\vv)g(\alpha;\vv),\quad 0<\alpha<1.
\end{equation}
Consider the  set
\begin{equation*}
\mathcal A(\vv)=\left\{0<\alpha<1: \frac{C(\gamma)\alpha}{1-\alpha}
F(\alpha;\vv)<1\right\}.
\end{equation*}
If $\alpha\in\mathcal A(\vv)$, then \eqref{point} implies that  $g(\alpha;\vv)=0$.
Consequently,
\begin{align*}
\mathcal A(\vv)\ \subset\ & \{0<\alpha<1: g(\alpha;\vv)=0\}\\
\ =\ &\{0<\alpha<1: F(\alpha;\vv)=0\}\\
\ \subset\ &
\mathcal A(\vv).
\end{align*}
Since both $F(\alpha;\vv)$ and $\alpha/(1-\alpha)$ are nondecreasing,
$\mathcal A(\vv)$ is an interval.
From \eqref{ellinfbound}, $F$ is bounded.  Hence, $\mathcal A(\vv)\ne\emptyset$.
Let
\begin{equation*}
\alpha(\vv)=\sup\mathcal A(\vv).
\end{equation*}
Since
\[
\mathcal A(\vv)=\{0<\alpha<1:F(\alpha;\vv)=0\},
\]
this agrees with the definition of $\alpha(\vv)$ in the statement of the theorem.
The continuity of $g$ then implies that
\[
F(\alpha;\vv)=g(\alpha;\vv)=0,\qquad 0<\alpha<\alpha(\vv).
\]
If $\alpha(\vv)<1$, the same holds at $\alpha=\alpha(\vv)$, and
$\mathcal A(\vv)=(0,\alpha(\vv)]$. If $\alpha(\vv)=1$, left-continuity gives
$F(1;\vv)=0$.

If $\alpha(\vv)<1$, then
\begin{equation*}
\frac{C(\gamma)\alpha(\vv)}{1-\alpha(\vv)}
F\left(\alpha(\vv)^+;\vv\right)\ge1.
\end{equation*}
From this,  we obtain the  lower bound 
\begin{equation*}
F(\alpha;\vv)\ge F\left(\alpha(\vv)^+;\vv\right)\ge
\frac{1-\alpha(\vv)}{C(\gamma)\alpha(\vv)},\quad \alpha(\vv)<\alpha\le1.
\end{equation*}

 By \eqref{ellinfbound}, we have $F(\alpha;\vv)\le m$.
So if $\alpha<(1+C(\gamma)m)^{-1}$, then
\begin{equation*}
 \frac{C(\gamma)\alpha}{1-\alpha}F(\alpha;\vv)
 < \frac{C(\gamma)m\alpha}{1-\alpha}<1.
\end{equation*}
Thus,
\begin{equation*}
1\ge \alpha(\vv)\ge (1+C(\gamma)m)^{-1}.
\end{equation*}

Suppose that the solution has a continuous representative on
$\vv_{\hat\alpha}$, for some $0<\hat\alpha\le1$.  
If  $\vv_{\hat\alpha}$  is compact, then  the representative 
is uniformly continuous on  $\vv_{\hat\alpha}$, and so,
$F(\alpha;\vv)$ is continuous on $(0,\hat\alpha]$.  
Having established that $F(\alpha;\vv)$ has a jump discontinuity at
$\alpha(\vv)$ whenever $\alpha(\vv)<1$, we conclude that
$\hat\alpha\le\alpha(\vv)$.  Consequently, $F(\alpha;\vv)$ vanishes on
$(0,\hat\alpha]$ and  the
 disturbance vanishes a.e.\  on $\vv_{\hat\alpha}$.
 
 If $\vv_{\hat\alpha}$ is not compact, then
 \[
 \vv_{\hat\alpha}\cap \bigl([0,T']\times\rr^3\bigr)
 \]
  is compact, for every
  $0<T'<T$.   Apply the preceding
 argument to the restriction of the solution to $\st[T']$ to obtain
 that the  disturbance vanishes a.e.\  on $\vv_{\hat\alpha}\cap\st[T']$ and then
 let $T'\uparrow T$.

\qed

\section{Proof of  Corollary~\ref{cor1}}

To prove the first assertion, fix $N\in\mathbb N$, $N>R$, and let
\[
\vv=\{x\in\rr^3:R\le |x|\le N\}.
\]
Then $F(0;\vv)=0$ by \eqref{esupp0.0}. Hence Theorem~\ref{thm2} implies that
$F(\alpha;\vv)=0$ for every $0\le \alpha\le \alpha(\vv)$, where
\[
\alpha(\vv)\ge (1+C(\gamma)m)^{-1}=c_0^{-1}.
\]
Taking $\alpha=c_0^{-1}$, we obtain
\[
\rho=1,\qquad u=0
\]
a.e. in
\[
\vv_{c_0^{-1}}
=
\{(t,x)\in S_T:R+c_0t\le |x|\le N-c_0t\}.
\]
Since $N$ is arbitrary, letting $N\to\infty$ gives
\[
\esupp(\rho-1,u)
\subset
\{(t,x)\in S_T:|x|\le R+c_0t\},
\]
as claimed.

For the second assertion, the discussion in   Section~\ref{notation} implies
that the restriction of $(\rho,u)$ to $[\sigma,T)\times\rr^3$ is an
entropy-admissible weak solution with initial data
$(\rho_\sigma,u_\sigma)$. Applying the first assertion to the
time-translated solution gives the desired inclusion.
\qed

\section{Proof of Theorem~\ref{thm3}}

\subsection*{Transition time and support properties}

For brevity, write
\begin{equation*}
\sss= \esupp(\rho-1,u).
\end{equation*}
For $0<t<T$, define the nested family
\[
\mathcal E_t
=
\left\{
(s,x)\in S_t:|x|>1+s
\right\},
\]
and set $\mathcal E_0=\emptyset$.
Thus
\[
\mathcal T
=
\left\{
0<t<T:\sss\cap\mathcal E_t\ne\emptyset
\right\}.
\]
Under the assumption \eqref{wkdatacond},  Theorem~\ref{thm1} implies that 
$(T_0(\kz),T)\subset\mathcal T$.  Hence $\tau=\inf\mathcal T$ is well-defined and satisfies
$0\le\tau\le T_0(\kz)$.  
By the definition of $\tau$ and the nestedness of the sets $\mathcal E_t$, \eqref{nospread} holds.

We proceed to verify \eqref{exteriorsupp} and \eqref{spread0}.
If $\tau=0$, then Corollary~\ref{cor1} and \eqref{esupp0.0} give
\[
\esupp(\rho-1,u)\subset\{(t,x)\in\st:|x|\le1+c_0t\}.
\]
Next, suppose that $\tau>0$, and let  $\{\sigma_j\} $ be a sequence of Lebesgue times
with $0<\sigma_j\uparrow \tau$.
 It is straightforward to verify from \eqref{nospread} that the 
corresponding Lebesgue values satisfy
\[
\esupp(\rho_{\sigma_j}-1,u_{\sigma_j})\subset\{x\in\rr^3:|x|\le 1+\sigma_j\}.
\]
Using  Corollary~\ref{cor1} again, we obtain 
\[
\sss\cap\bigl((\sigma_j,T)\times\rr^3\bigr) \subset 
\{(t,x)\in (\sigma_j,T)\times\rr^3:\ |x|\le 1+\sigma_j+c_0(t-\sigma_j)\}.
\]
Letting $\sigma_j\uparrow\tau$, we conclude that
\[
\sss\cap\left((\tau,T)\times\rr^3\right) \subset 
\{(t,x)\in (\tau,T)\times\rr^3:\ |x|\le 1+\tau+c_0(t-\tau)\}.
\]
This proves \eqref{exteriorsupp}.

Since $\mathcal T$ is an upper interval with infimum $\tau$,
\[
\mathcal S\cap\mathcal E_t\ne\emptyset,
\qquad \tau<t<T.
\]
As $\mathcal E_t$ is open in $\st$, the definition of essential
support implies that $\mathcal S\cap\mathcal E_t$ has positive measure.

Suppose first that $\tau>0$. Since
\[
\mathcal E_\sigma\cap\sss=\emptyset,
\qquad 0\le\sigma<\tau,
\]
and $\mathcal E_\sigma\uparrow\mathcal E_\tau$ as
$\sigma\uparrow\tau$, we have
$\mathcal E_\tau\cap\sss=\emptyset$. The same conclusion holds when
$\tau=0$, since $\mathcal E_0=\emptyset$. Since
$\{\tau\}\times\rr^3$ has measure zero,
\[
\sss\cap\mathcal E_t\isp{and}\sss\cap\mathcal E_t\cap\bigl((\tau,t)\times\rr^3\bigr)
\]
have the same positive measure. Combined with
\eqref{exteriorsupp}, this proves \eqref{spread0}.

\subsection*{Shifted version of Theorem~\ref{thm2}}
It remains to prove the assertion concerning the jump profile.
For this purpose, we first observe that the argument of
Theorem~\ref{thm2} may be shifted to the time $\tau$ without
assuming that $\tau$ is a Lebesgue time.
Fix $ \tau'$ such that $\tau<\tau'<T$.  We shall work on the set $\st[\tau']$.
Let $\vv\subset\rr^3$ be a nonempty, closed,
and bounded set with Lipschitz boundary such that
\begin{subequations}
\begin{equation}
\label{obs1}
(\rho_0-1,u_0)=0\quad\text{a.e. on }\vv
\end{equation}
and
\begin{equation}
\label{obs2}
(\rho-1,u)=0\quad\text{a.e. on }(0,\tau)\times \vv.
\end{equation}
\end{subequations}
For $0<\alpha\le1$, set
\[
d_{\alpha}(t,x)
=
\left(
\dist(x,\rr^3\setminus \vv)
-\frac{(t-\tau)_+}{\alpha}
\right)_+.
\]
Given $\tau<t^\ast<\tau'$ and $0<\eps<t^\ast-\tau$, use in the
relative-energy inequality \eqref{enineq1} the test function
\[
\varphi_{\alpha}(t,x)
=
h\left(\frac{t^\ast-t}{\eps}\right)d_{\alpha}(t,x),
\]
where $h$ is the time cutoff used in the proof of
Theorem~\ref{thm2}. Then
\[
\varphi_{\alpha}\in C^{0,1}_0(S_T)
\]
and
\[
\varphi_{\alpha}=d_{\alpha}
\qquad\text{on }[0,t^\ast-\eps]\times\rr^3.
\]
Its support in $(0,\tau)\times\rr^3$ is contained in
$(0,\tau)\times \vv$, where the disturbance vanishes. 
The condition needed to apply \eqref{enineq1} is satisfied, since
$\supp\varphi_\alpha(0,\cdot)\subset \vv$ and the 
 initial disturbance vanishes a.e.\ on $\vv$.
Thus all contributions
from $t<\tau$ vanish, while on $(\tau,t^\ast)\times\rr^3$ the argument
in the proof of Theorem~\ref{thm2} applies after the translation
$t\mapsto t-\tau$. Letting $\eps\downarrow0$ and then
$t^\ast\uparrow \tau'$ gives the corresponding conclusions for the
shifted family.

More precisely, let
\[
\vv_{\alpha}
=
\supp d_{\alpha}
\cap\bigl((\tau,\tau')\times\rr^3\bigr)
\]
and set
\[
F(\alpha;\vv)
=
\|\rho^{-1}(\rho-1)\|_{L^\infty(\vv_{\alpha})}
+
\|u\|_{L^\infty(\vv_{\alpha})},
\qquad 0<\alpha\le1,
\]
with $F(0;\vv)=0$. Then $F(\cdot;\vv)$ is bounded,
nondecreasing, and left-continuous. Moreover,
\[
\alpha(\vv)
=
\sup\left\{
0\le\alpha\le1:F(\alpha;\vv)=0
\right\}
\ge c_0^{-1}.
\]
If $\alpha(\vv)<1$, then
\[
F(\alpha;\vv)=0,
\qquad
0\le\alpha\le\alpha(\vv),
\]
whereas
\[
F(\alpha;\vv)
\ge
F\bigl(\alpha(\vv)^+;\vv\bigr)>0,
\qquad
\alpha(\vv)<\alpha\le1.
\]

\subsection*{Construction of the jump profile}

By \eqref{spread0}, we may choose
\[
(t_0,x_0)\in \sss\cap\left\{(t,x)\in(\tau,\tau')\times\rr^3:
1+t< |x|\le1+\tau+c_0(t-\tau)\right\},
\]
and then set $\nu=x_0/|x_0|$.
 Let
\[
R>1+\tau+2c_0(\tau'-\tau)
\]
so that
\begin{equation}
\label{Rprop}
1+\tau + c_0(t-\tau)\le R-c(t-\tau),\quad 1\le c\le c_0,\quad \tau<t<\tau',
\end{equation}
and define
\[
\vv=\{x\in\rr^3:\nu\cdot x\ge1+\tau,\ |x|\le R\}.
\]
If $x\in\vv$, then $|x|\ge \nu\cdot x\ge 1+\tau\ge1$.  Thus, by \eqref{esupp0.0}, the condition
\eqref{obs1} holds.
At the same time, if $(t,x)\in(0,\tau)\times\vv$, then $|x|> 1+t$, and we see from \eqref{nospread}
that \eqref{obs2} is also valid.

Following the notation above, for   $ 0<\alpha\le1$, we have
\begin{multline*}
 \vv_{\alpha}=\{(t,x)\in (\tau,\tau')\times\rr^3: \\
 \nu\cdot x\ge 1+\tau+(t-\tau)/\alpha,\; |x|\le R-(t-\tau)/\alpha\}.
\end{multline*}
Letting $c=\alpha^{-1}$, we can write
\begin{multline*}
\vv_{c^{-1}}=\{(t,x)\in (\tau,\tau')\times\rr^3: \\
\nu\cdot x\ge 1+\tau+c(t-\tau),\;  |x|\le R-c(t-\tau)\},
\end{multline*}
for $1\le\ c<\infty$.  

From \eqref{exteriorsupp} and \eqref{Rprop}, for $1\le c\le c_0$ the radial
cutoff is inactive on $\sss$, and hence
\[
\vv_{c^{-1}}\cap\sss=\mathcal P_c\cap\sss,\quad 1\le c\le c_0,
\]
with
\begin{multline*}
\mathcal P_c=\{(t,x)\in (\tau,\tau')\times\rr^3: \\
\nu\cdot x\ge 1+\tau+c(t-\tau),\; |x|\le 1+\tau+c_0(t-\tau)\}.
\end{multline*}

On the other hand, using the notation \eqref{jumpplane}, we also have
\begin{equation*}
{\mathcal H}_{\tau,\tau'}^{\nu,c}\cap\sss 
= \mathcal P_c\cap\sss.
\end{equation*}
\begin{figure}[H]
\caption{Schematic geometry in the $(\nu\cdot x,t)$-plane.
The line $\nu\cdot x=1+t$ represents the supporting hyperplane,
in the direction $\nu$, to the classical spherical front
$|x|=1+t$, while
$\nu\cdot x=1+\tau+c_0(t-\tau)$ is the corresponding supporting
hyperplane for the outer propagation bound
$|x|=1+\tau+c_0(t-\tau)$.
The shaded region to the right of
$\nu\cdot x=1+\tau+c(t-\tau)$, for $\tau<t<\tau'$, is
$\mathcal H_{\tau,\tau'}^{\nu,c}$, where $1<c<c_0$.
This region is unbounded in the $\nu\cdot x$-direction.}
\label{diagram3}

\centering
\vspace{0.3cm}
\begin{tikzpicture}[x=0.78cm,y=0.78cm]
  \def\xP{1.55}
  \def\ytau{1.30}
  \def\ytaup{3.15}
  \def\yT{5.75}
  \def\xright{11.20}
  \def\xaxisend{11.85}

  \def\mone{0.72}
  \def\mc{1.42}
  \def\mczero{2.02}

  \pgfmathsetmacro{\ymidlabel}{0.5*(\ytaup+\yT)}
  \pgfmathsetmacro{\poslabel}{\ymidlabel/\yT}

  \coordinate (P) at (\xP,\ytau);

  \coordinate (A) at ({\xP+\mone*(\yT-\ytau)},\yT);
  \coordinate (B) at ({\xP+\mc*(\yT-\ytau)},\yT);
  \coordinate (C) at ({\xP+\mczero*(\yT-\ytau)},\yT);

  \coordinate (A0) at ({\xP-\mone*\ytau},0);

  \coordinate (Q) at ({\xP+\mc*(\ytaup-\ytau)},\ytaup);

  \draw[->] (0,0) -- (0,6.55) node[above] {$t$};
  \draw[->] (0,0) -- (\xaxisend,0) node[right] {$\nu\cdot x$};

  \draw[densely dashed] (0,\ytau) -- (P);
  \draw[densely dashed] (0,\ytaup) -- (Q);
  \draw[densely dashed] (0,\yT) -- (A);

  \node[left] at (0,\ytau) {\small{$t=\tau$}};
  \node[left] at (0,\ytaup) {\small{$t=\tau'$}};
  \node[left] at (0,\yT) {\small{$t=T$}};

  \fill[gray!15]
    (P) -- (Q) -- (\xright,\ytaup) -- (\xright,\ytau) -- cycle;

  \draw[gray] (P) -- (\xright,\ytau);
  \draw[gray] (Q) -- (\xright,\ytaup);
  \draw[->,gray] (8.95,2.20) -- (10.90,2.20);

  \node at (7.55,2.30) {$\mathcal H^{\nu,c}_{\tau,\tau'}$};

  \draw (A0) -- (A)
    node[pos=.74,sloped,above=4pt,fill=white,inner sep=1pt]
    {\scriptsize $\nu\cdot x=1+t$};

  \draw (P) -- (B)
    node[pos=.68,sloped,above=3pt,fill=white,inner sep=1pt]
    {\scriptsize $\nu\cdot x=1+\tau+c(t-\tau)$};

  \draw (P) -- (C)
    node[pos=.73,sloped,above=3pt,fill=white,inner sep=1pt]
    {\scriptsize $\nu\cdot x=1+\tau+c_0(t-\tau)$};
\end{tikzpicture}
\end{figure}
It follows that the function
\begin{equation*}
f(c)=\|\rho^{-1}(\rho-1)\|_{L^\infty\left({\mathcal H}_{\tau,\tau'}^{\nu,c}\right)}
+
\|u\|_{L^\infty\left({\mathcal H}_{\tau,\tau'}^{\nu,c}\right)}
\end{equation*}
satisfies
$F(c^{-1};\vv)=f(c)$, $1\le c\le c_0$.
If $c>c_0$, then  
\[
\vv_{c^{-1}}\cap\sss={\mathcal H}_{\tau,\tau'}^{\nu,c}\cap\sss=\emptyset.
\]
 Thus, $F(c^{-1};\vv)=f(c)=0$, for $c>c_0$.  Altogether, we have shown
 that
 \[
F(c^{-1};\vv)=f(c),\quad 1\le c<\infty.
\]

By the choice of $\nu$, we have
\[
\nu\cdot x_0=|x_0|>1+t_0.
\]
Hence, $(t_0,x_0)$ lies in the interior of
$\mathcal H_{\tau,\tau'}^{\nu,1}$, and its membership in $\mathcal S$
gives
\[
f(1)=F(1;\mathcal V)>0.
\]
Therefore, using the shifted version of Theorem~\ref{thm2} proved above, we obtain
\[
\alpha(\vv)<1.
\]
Set
\[
c_1=\alpha(\vv)^{-1}.
\]
Since $\alpha(\vv)\ge c_0^{-1}$, we have
\[
1<c_1\le c_0.
\]
Moreover,
\[
f(c)=0,
\qquad c_1\le c<\infty,
\]
whereas
\[
f(c)
\ge
F\bigl(\alpha(\vv)^+;\vv\bigr)>0,
\qquad 1\le c<c_1.
\]
Consequently,
\[
f(c_1^-)>f(c_1^+)=0.
\]

Finally, set
\[
g(c)=\|\rho-1\|_{L^\infty\left({\mathcal H}_{\tau,\tau'}^{\nu,c}\right)}+\|u\|_{L^\infty\left({\mathcal H}_{\tau,\tau'}^{\nu,c}\right)}.
\]
The function $g$ is nonincreasing because the sets
${\mathcal H}_{\tau,\tau'}^{\nu,c}$ are decreasing in $c$.
Since $\rho^{-1}$ and $\rho$ belong to $L^\infty(\st)$, we have 
\begin{multline*}
\|\rho^{-1}\|_{L^\infty(\st)}^{-1}\|\rho^{-1}(\rho-1)\|_{L^\infty\left({\mathcal H}_{\tau,\tau'}^{\nu,c}\right)}
\\ \le\|\rho-1\|_{L^\infty\left({\mathcal H}_{\tau,\tau'}^{\nu,c}\right)}
\le \|\rho\|_{L^\infty(\st)}\|\rho^{-1}(\rho-1)\|_{L^\infty\left({\mathcal H}_{\tau,\tau'}^{\nu,c}\right)}.
\end{multline*}
The preceding bounds show that $f$ and $g$ are comparable, with
constants independent of $c$.
 Hence, $g(c)=0$ for
$c_1\le c<\infty$, while $g(c)$ is bounded below by a positive constant for
$1\le c<c_1$. Therefore $g$ has a jump discontinuity at $c=c_1$, with
\[
g(c_1^-)>g(c_1^+)=0.
\]

\qed

\bibliography{CEE}
\bibliographystyle{amsplain}

\end{document}